\newtheorem{thm}{Theorem}[section]
\newtheorem*{thma}{Theorem A}
\newtheorem*{collar}{Collar Lemma}
\newtheorem{cor}[thm]{Corollary}
\newtheorem{lem}[thm]{Lemma}
\newtheorem{prop}[thm]{Proposition}
\theoremstyle{definition}
\newtheorem{defn}{Definition}
\newtheorem{rem}{Remark}
\newtheorem*{acknowledgement}{Acknowledgement}
\numberwithin{figure}{section}
\newcommand{\PP}{{\mathbb P}}
\newcommand{\C}{{\mathbb C}}
\newcommand{\Z}{{\mathbb Z}}
\newcommand{\D}{{\mathbb D}}
\newcommand{\KK}{{\mathcal K}}
\newcommand{\FF}{{\mathcal F}}
\newcommand{\ra}{{\rightarrow}}
\newcommand{\Int}{{\textup{Int}}}
\newcommand{\Mob}{\textup{M\"ob}}
\begin{document}

\title{Quasiconformal Homogeneity of Genus Zero Surfaces
\thanks{\textup{2000} Mathematics Subject Classification: 30C62 (primary), 30F45 (secondary).}
}

\author{Ferry Kwakkel\thanks{The first author was supported by Marie Curie grant MRTN-CT-2006-035651 (CODY).} 
\and Vladimir Markovic }

\maketitle

\abstract{A Riemann surface $M$ is said to be $K$-quasiconformally homogeneous if for every two points $p,q \in M$, there exists a $K$-quasiconformal
homeomorphism $f \colon M \ra M$ such that $f(p) = q$. In this paper, we show there exists a universal constant $\KK > 1$ such that if $M$ is a 
$K$-quasiconformally homogeneous hyperbolic genus zero surface other than $\D^2$, then $K \geq \KK$. This answers a question by Gehring 
and Palka~\cite{gehring}. Further, we show that a non-maximal hyperbolic surface of genus $g \geq 2$ is not $K$-quasiconformally homogeneous for any
finite $K \geq 1$.} 

\bigskip

\section{Introduction and statement of results}\label{sec_quasi_results}

Let $M$ be a Riemann surface and let $\FF_K(M)$ be the family of all $K$-quasiconformal homeomorphisms of $M$. Then $M$ is said to be $K$-quasiconformally homogeneous
if the family $\FF_K(M)$ is transitive; that is, given any two points $p,q \in M$, there exists an element $f \in \FF_K(M)$ such that $f(p) =q$. In 1976, the notion of quasiconformal homogeneity of Riemann surfaces was introduced by Gehring and Palka~\cite{gehring} in the setting of genus zero surfaces (and higher dimensional analogues). It was shown in~\cite{gehring} that the only genus zero surfaces admitting a transitive conformal family are exactly those which are not hyperbolic, i.e. the surfaces conformally equivalent to either $\PP^1$, $\C$, $\C^* = \C \setminus \{0\}$ or $\D^2$. It was also shown, by means of examples, that there exist hyperbolic genus zero surfaces, homeomorphic to $\PP^1$ minus a Cantor set, that are $K$-quasiconformally homogeneous, for some finite $K >1$. Recently, this problem has received renewed interest. Using Sullivan's Rigidity Theorem, it was shown in~\cite{bonfert_1} by Bonfert-Taylor, Canary, Martin and Taylor, that in dimension $n \geq 3$, there exists a universal constant $\KK_n > 0$ such that for every $K$-quasiconformally homogeneous hyperbolic $n$-manifold other than $\D^n$, it must hold that $K \geq \KK_n$. In~\cite{bonfert_2}, by Bonfert-Taylor, Bridgeman and Canary, a partial result was obtained in dimension two for a certain subclass of closed hyperbolic surfaces satisfying a fixed-point condition. In the setting of genus zero surfaces, notions similar to (but stronger than) quasiconformal homogeneity have been studied by MacManus, N\"akki and Palka in~\cite{macmanus} and further developed in~\cite{bonfert_4} and~\cite{bonfert_3} by Bonfert-Taylor, Canary, Martin, Taylor and Wolf. 

\medskip

In this paper, we answer the original question of Gehring and Palka for genus zero surfaces. 

\begin{thma}[Genus zero surfaces]
There exists a constant $\KK > 1$, such that if $M$ is a $K$-quasiconformally homogeneous hyperbolic genus zero surface other than $\D^2$, 
then $K \geq \KK$.
\end{thma}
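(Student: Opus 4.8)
The plan is to argue by contradiction via a geometric limit argument, with the Gehring--Palka classification recalled in the introduction as the endgame. Suppose no such constant $\KK$ exists; then there are hyperbolic genus zero surfaces $M_n \neq \D^2$, each $K_n$-quasiconformally homogeneous, with $K_n \to 1$. The first step is to record that such surfaces have \emph{bounded geometry with constants controlled by $K$}. Because $M_n \neq \D^2$, its injectivity radius is finite at some point; since a homotopically nontrivial loop of hyperbolic length $\ell$ is enclosed by an essential annulus whose modulus tends to infinity as $\ell \to 0$, while a $K$-quasiconformal homeomorphism distorts the modulus of an annulus by a factor at most $K$, transporting such a configuration around $M_n$ by the homogeneity maps (and their inverses, which are again $K_n$-quasiconformal) and lifting to the universal cover $\D^2$ yields $\sup_{M_n}\tu{inj} \leq C(K_n)\,\inf_{M_n}\tu{inj}$ for a function $C$ with $C(K)\to 1$ as $K\to 1$; here one uses that a $1$-quasiconformal automorphism is a hyperbolic isometry and hence preserves $\tu{inj}$ exactly. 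Since $\tu{inj}$ is positive at every point, it follows that $0 < \inf_{M_n}\tu{inj} \le \sup_{M_n}\tu{inj} < \infty$; in particular $M_n$ has no cusps and no funnel ends.

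With uniform two-sided bounds on the injectivity radius in hand, choose basepoints $p_n\in M_n$ and pass to a pointed geometric limit $(M_n,p_n)\to(M_\infty,p_\infty)$ along a subsequence. By the injectivity radius bounds the limit is a complete hyperbolic surface; it is of genus zero because a non-separating simple closed curve in $M_\infty$ would, by geometric convergence, appear in $M_n$ for $n$ large, contradicting planarity; and $M_\infty \neq \D^2$ because it carries a homotopically nontrivial loop of length at most $2\sup_{M_n}\tu{inj}$, which stays bounded. Finally, the $K_n$-quasiconformal maps realizing the transitivity of $\FF_{K_n}(M_n)$ subconverge, by the normality of uniformly quasiconformal families together with $K_n\to 1$ and lower semicontinuity of the dilatation, to $1$-quasiconformal --- that is, conformal --- self-maps of $M_\infty$ forming a transitive family. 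Thus $M_\infty$ is a hyperbolic surface of genus zero admitting a transitive conformal family; by the Gehring--Palka classification~\cite{gehring} the only genus zero surfaces with this property are $\PP^1,\C,\C^*$ and $\D^2$, and the first three are not hyperbolic, so $M_\infty=\D^2$ --- a contradiction. Hence $\KK$ exists.

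The step I expect to be the main obstacle is making the geometric limit genuinely useful: one must ensure that, after passing to a subsequence and choosing the basepoints appropriately, the injectivity radii of the $M_n$ stay confined to a fixed compact subinterval of $(0,\infty)$, so that $M_\infty$ is a nondegenerate surface still distinct from $\D^2$. The delicate point is that $C(K_n)\to 1$ forces the injectivity radius of $M_n$ to become nearly constant, so one must rule out both the collapse $\inf_{M_n}\tu{inj}\to 0$ and, more seriously, the ``uniform thickening'' $\inf_{M_n}\tu{inj}\to\infty$, which would permit $M_\infty=\D^2$ and destroy the contradiction. Equivalently one needs an a priori bound on $\inf_M\tu{inj}$ for quasiconformally homogeneous genus zero surfaces $M\neq\D^2$ that does not deteriorate as $K\to 1$. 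This is precisely where the genus zero hypothesis enters in an essential way: the shortest closed geodesic $\gamma$ of $M$ is simple and \emph{separates} $M$ into two infinite-type pieces, and when $\gamma$ is long its collar (via the Collar Lemma) is very thin, so that the local conformal geometry near a point of $\gamma$ --- measured by moduli of suitable annular configurations --- differs quantitatively from that near a point deep inside one of the complementary pieces; feeding this into the modulus-distortion inequality for the homogeneity maps forces $K$ to stay bounded away from $1$. Once this a priori control is secured, the limiting argument above goes through.
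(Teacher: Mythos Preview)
Your approach via geometric limits is natural but founders at exactly the point you flag as delicate. The upper bound on the injectivity radius you hope for does not exist: Lemma~\ref{lem_bounds_inj_rad} (taken from~\cite{bonfert_1}) asserts that for any $K$-quasiconformally homogeneous hyperbolic surface $M\neq\D^2$ one has $\iota(M)\to\infty$ as $K\to 1$. Thus along any sequence $M_n$ with $K_n\to 1$ the injectivity radii genuinely diverge, the pointed geometric limit is $\D^2$, and no contradiction emerges. This is not a degenerate subcase to be excluded; it is the actual behaviour, and the entire content of the theorem is to extract a contradiction from surfaces that look like $\D^2$ on balls of arbitrarily large radius.

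Your proposed rescue via the Collar Lemma does not work as stated. When the shortest geodesic $\gamma$ has length $\ell\to\infty$, the collar width $m(\ell)=\arcsin(1/\sinh(\ell/2))\to 0$, so the collar annulus has modulus tending to \emph{zero}: it is a weak separator, not a strong one, and carries no useful conformal obstruction. More fundamentally, since $\iota(M)\to\infty$, every point of $M$ lies at the centre of an embedded hyperbolic disk of radius tending to infinity, so all local conformal invariants become asymptotically uniform across $M$; there is no point ``deep inside a complementary piece'' whose local annular moduli differ detectably from those of a point on $\gamma$. Indeed, quasiconformal homogeneity together with Lemmas~\ref{lem_length_geo} and~\ref{lem_approx_geodesic} forces a geodesic of length close to $\lambda(M)$ to pass near every point of $M$, so no point is ``deep'' in the sense your argument requires. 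What you have written in the last paragraph is, in effect, a restatement of the theorem rather than a mechanism for proving it.

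The paper's argument is completely different and does not pass to a limit. It exploits the largeness of $\lambda(M)$ directly via the combinatorics of short geodesics on a genus zero surface: one shows that $\delta_0$-short geodesics must intersect (Lemma~\ref{lem_intersections}), that any two intersecting short geodesics meet in exactly two points with all four arcs of length close to $\lambda(M)/2$ (the two-circle Lemma~\ref{lem_two_circles}), that one can arrange an intersection at angle at least $\pi/4$ (Lemma~\ref{lem_large_angle}), and that this allows one to thread a third short geodesic through a two-circle configuration so as to produce a three-circle configuration all eight of whose complementary triangles are homotopically non-trivial (Lemmas~\ref{lem_exist_three_circle} and~\ref{lem_triangle_non_trivial}). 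Summing the boundary lengths then gives $2\sum_{i=1}^3|\gamma_i|\geq 8\lambda(M)$, while $2\sum_{i=1}^3|\gamma_i|\leq 6(1+6\delta_0)\lambda(M)<7\lambda(M)$, the desired contradiction.
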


The outline of the proof of Theorem A is as follows. First, we restrict our attention to {\em short geodesics}, that is, simple closed geodesics 
which are close in length to the infimum of the lengths of all simple closed geodesics on our surface $M$. For $K>1$ small enough, using $K$-quasiconformal homogeneity, we show there exist intersections of short simple closed geodesics in a small neighbourhood of any preassigned point. Using this information, we construct a configuration of three intersecting short simple closed geodesics, see the three-circle Lemma below. By a combinatorial argument, we show that if $M$ is near conformally homogeneous, these configurations can not exist, leading to the desired contradiction. 

\medskip

As the only genus zero surfaces homogeneous with respect to a conformal family are conformally equivalent to either $\PP^1, \C, \C^*$ or $\D^2$,
we thus have the following corollary of Theorem A.

\begin{cor}
There exists a constant $\KK>1$ such that if $M$ is a $K$-quasiconformally homogeneous genus zero surface with $K < \KK$, 
then $M$ is conformally equivalent to either $\PP^1, \C$, $\C^*$ or $\D^2$.
\end{cor}

This paper is organised as follows. In section 2 we recall some standard material 
and background results that will be used in the remainder of this paper. In section 3 
we present our proof of Theorem A. In section 4 we discuss the problem of obtaining 
a good estimate of the universal constant $\KK > 1$ whose existence we establish in the 
proof of Theorem A. 

\begin{acknowledgement}
The authors thank the referee for several useful suggestions and comments on the manuscript.
\end{acknowledgement}

\section{Preliminaries}\label{sec_quasi_geo_est}

Let us first recall some background material on hyperbolic surfaces and quasiconformal mappings and 
set notation used throughout the remainder of this paper. 

\subsection{Definitions}\label{subsec_defn}

Let $M$ be a hyperbolic Riemann surface. By uniformizing $M$, we may assume that $M = \D^2 \slash \Gamma$ is covered by the Poincar\'e disk $\D^2$. We endow $M$ with the metric $d(\cdot, \cdot)$, induced from the canonical hyperbolic metric $\widetilde{d}(\cdot, \cdot)$ on $\D^2$. The {\em injectivity radius} $\iota(M)$ of $M$ is the infimum over all $p \in M$ of the largest radius for which the exponential map at $p$ is a injective. Define $\lambda(M)$ to be the infimum of the lengths of all simple closed geodesics on $M$. We have that $\lambda(M) \geq 2\iota(M)$. We denote by $D(p,\rho) \subset M$ the closed hyperbolic disk with center $p$ and radius $\rho$. Let $\FF_K(M)$ be the family of all $K$-quasiconformal homeomorphisms of $M$. Then $M$ is said to be $K$-quasiconformally homogeneous if the family $\FF_K(M)$ is transitive; that is, given any two points $p, q \in M$, there exists an element $f \in \FF_K(M)$ such that $f(p) = q$. 

Let us also recall the following. An open Riemann surface $M$ is a said to be {\em extentable} or {\em non-maximal} if it can be embedded in another Riemann surface $M_0$ as a proper subregion; that is, if there exists a conformal mapping of $M$ onto a proper subregion of $M_0$. If $M$ is not extentable, then it is called {\em maximal}. Every open Riemann surface is contained in a maximal Riemann surface (including infinite genus Riemann surfaces),  see~\cite{bochner}. A non-maximal Riemann surface of genus $0$, i.e. embedded in the Riemann sphere, is also called a {\em planar domain}. Proposition~\ref{prop_genus_high} below shows that a non-maximal hyperbolic surface of genus $1 \leq g \leq \infty$ is not 
$K$-quasiconformally homogeneous for any $K \geq 1$. Therefore, as far as quasiconformal homogeneity of hyperbolic surfaces is concerned, 
one can further restrict to either:

\begin{itemize}
\item[\textup{(i)}] hyperbolic genus zero surfaces, or 
\item[\textup{(ii)}] maximal surfaces of genus $2 \leq g \leq \infty$.
\end{itemize}

In this paper, we restrict attention to hyperbolic genus zero surfaces.

\medskip

Given a closed curve $\gamma \in M$, we denote by $[\gamma]$ the homotopy class of $\gamma$ in $M$. The {\em geometric intersection number} 
of isotopy classes of two closed curves $\alpha, \beta \in \pi_1(M)$ is defined by 
\begin{equation}
i(\alpha, \beta) = \min \# \{ \gamma \cap \gamma' \},
\end{equation}
where the minimum is taken over all closed curves $\gamma, \gamma' \subset M$ with $[\gamma] = \alpha$ and $[\gamma'] = \beta$\index{geometric intersection number}. In other words, the geometric intersection number is the least number of intersections between curves representing the two homotopy classes. Let us recall some standard facts about simple closed curves, and in particular simple closed geodesics, on genus zero surfaces, 
see e.g.~\cite{farb}.

\begin{lem}[Curves on genus zero surfaces]\label{lem_curves_properties}
Let $M$ be a genus zero surface.
\begin{enumerate}
\item[\textup{(i)}] Every simple closed curve $\gamma \subset M$ separates $M$ into exactly two connected components.
\item[\textup{(ii)}] If $\gamma, \gamma' \subset M$ are two simple closed curves, then $i(\gamma, \gamma')$ is even.
\item[\textup{(iii)}] If $\gamma, \gamma' \subset M$ are non-homotopic closed geodesics, then the closed curve $\alpha \subset M$ formed by any 
two subarcs $\eta \subset \gamma$ and $\eta' \subset \gamma'$ connecting two points of $\gamma \cap \gamma'$ is homotopically non-trivial. 
\end{enumerate}
\end{lem}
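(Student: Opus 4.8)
The plan is to prove the three parts of Lemma~\ref{lem_curves_properties} using the standard topology of genus zero surfaces together with elementary facts about geodesics in the hyperbolic metric. Throughout, recall that a genus zero surface $M$ embeds in $\PP^1$, so every simple closed curve on $M$ bounds a disk on each side \emph{in $\PP^1$}; the subtlety is only in tracking which punctures/ends lie on which side inside $M$ itself.

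\emph{Part (i).} I would argue via the Jordan curve theorem on $\PP^1$. Since $M$ has genus zero, fix a conformal embedding $M \hookrightarrow \PP^1$. A simple closed curve $\gamma \subset M \subset \PP^1$ separates $\PP^1$ into two open Jordan domains $U_1, U_2$. Then $M \setminus \gamma = (M \cap U_1) \sqcup (M \cap U_2)$, so it suffices to show each $M \cap U_i$ is nonempty and connected. Nonemptiness: a collar neighbourhood of $\gamma$ in $M$ meets both sides, so $M \cap U_i \neq \emptyset$. Connectedness: each $U_i$ is homeomorphic to a disk, hence simply connected, and $M \cap U_i$ is an open subsurface of $U_i$; its complement $U_i \setminus M$ consists of the ends/ideal boundary points of $M$ lying in $U_i$, which is a closed subset of $U_i$ not separating $U_i$ because removing a closed totally-something set... — more cleanly, I would instead note that any two points of $M \cap U_i$ can be joined by a path in $U_i$, and by a small perturbation (general position, since $M$ is an open dense subset of its closure in $\PP^1$ minus the end set) one keeps the path inside $M$. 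Alternatively, invoke the classification of genus zero surfaces: $M$ is homeomorphic to $\PP^1$ minus a closed totally disconnected set $E \subset \PP^1$, and removing such a set does not disconnect either side of a Jordan curve. This is the cleanest route and I would use it.

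\emph{Part (ii).} Put $\gamma, \gamma'$ in minimal position, realising $i(\gamma,\gamma') = \#(\gamma \cap \gamma')$, with all intersections transverse. Orient $\gamma$. Each intersection point is a crossing where $\gamma'$ passes from one side of $\gamma$ to the other. By part (i), $\gamma$ separates $M$ into two components $M_1, M_2$; traversing the closed curve $\gamma'$ once, each time it crosses $\gamma$ it switches between $M_1$ and $M_2$, and since $\gamma'$ returns to its starting point (hence to its starting side) the number of switches is even. Hence $i(\gamma,\gamma')$ is even. (One should note minimal position is achieved by closed geodesics, or by the bigon criterion, so the count computed geometrically equals $i$.)

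\emph{Part (iii).} This is the part requiring a genuine argument and I expect it to be the main obstacle. Let $\gamma, \gamma'$ be distinct closed geodesics meeting at $p, q \in \gamma \cap \gamma'$, let $\eta \subset \gamma$ and $\eta' \subset \gamma'$ be subarcs from $p$ to $q$, and let $\alpha = \eta \cup \bar\eta'$ (reversing $\eta'$) be the resulting closed curve. Suppose for contradiction $\alpha$ is null-homotopic in $M$. Lift to the universal cover $\D^2$: choose lifts $\tilde\eta$ and $\tilde\eta'$ of $\eta, \eta'$ starting at a common lift $\tilde p$ of $p$; then $\tilde\eta$ lies on a complete geodesic line $L \subset \D^2$ (the axis of the hyperbolic element, or a lift of $\gamma$) and $\tilde\eta'$ lies on a complete geodesic line $L'$. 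Since $\alpha$ is null-homotopic, the lift $\tilde\alpha = \tilde\eta \cup (\text{lift of } \bar\eta')$ is a closed loop in $\D^2$, which forces the endpoint of $\tilde\eta$ and the endpoint of the corresponding lift of $\eta'$ to coincide — i.e. $L$ and $L'$ share \emph{two} distinct points of $\D^2$ (the lifts of $p$ and of $q$). Two distinct complete geodesics in $\D^2$ meet in at most one point, so $L = L'$. Projecting down, $\gamma$ and $\gamma'$ are then the images of the same geodesic line under $\Gamma$, hence have the same (unoriented) axis up to the $\Gamma$-action; a short argument with the stabiliser of $L$ in $\Gamma$ (it is cyclic, generated by a primitive hyperbolic element whose axis is $L$) shows $\gamma$ and $\gamma'$ are the same closed geodesic, contradicting $[\gamma] \neq [\gamma']$. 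The care needed is exactly in this last step: deducing from "$\gamma, \gamma'$ are covered by the same geodesic line $L$" that they are equal as geodesics (not merely that $\gamma'$ wraps a multiple of $\gamma$, which for \emph{simple} closed geodesics is also excluded, but here simplicity of $\gamma,\gamma'$ is not assumed — only non-homotopy — so I would phrase the conclusion as: the primitive geodesic under $L$ is unique, $\gamma$ and $\gamma'$ are both positive powers of it, and by choosing $\eta,\eta'$ to be embedded subarcs between \emph{consecutive} intersection points one gets the exponents equal, forcing $[\gamma]=[\gamma']$). The main obstacle is thus the bookkeeping in this covering-space argument rather than any deep input.
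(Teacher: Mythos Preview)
The paper does not prove this lemma; it is stated as a standard fact with a reference to Farb--Margalit, so there is no proof to compare against. Your arguments are essentially correct, with the following remarks.

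For (i), your final route---that $M$ is homeomorphic to $\PP^1$ minus a closed totally disconnected set---is in fact true (this is the Ker\'ekj\'art\'o--Richards classification of noncompact surfaces by their end spaces), but you invoke it without justification and it is heavier than needed. A self-contained argument: the one-sided collar $\gamma\times(0,1)\subset M\cap U_i$ is connected, hence lies in a single component $N_0$ of $M\cap U_i$; any component $N$ of $M\cap U_i$ has closure in $M$ meeting $\gamma$ (otherwise $N$ would be clopen in $M$), and near such a boundary point $N$ contains part of the collar, forcing $N=N_0$.

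Part (ii) is clean and standard.

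For (iii), your lifting argument is correct when $\gamma,\gamma'$ are simple closed geodesics---which is the only case the paper ever uses---and you are right to flag the non-simple case as delicate. In fact the statement as literally written fails without simplicity: if $\gamma=\gamma_0^{\,2}$ and $\gamma'=\gamma_0^{\,3}$ for a primitive closed geodesic $\gamma_0$, then $\gamma\cap\gamma'=\gamma_0$ as a set, and one may take $\eta\subset\gamma$ and $\eta'\subset\gamma'$ to traverse the same subarc of $\gamma_0$ in opposite directions, producing a null-homotopic $\alpha$. Your parenthetical attempt to rescue the general case by restricting to ``consecutive intersection points'' does not work, both because the lemma quantifies over \emph{all} subarcs and because in this degenerate situation the intersection is not discrete. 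The intended hypothesis is simply that $\gamma,\gamma'$ are simple (equivalently here, that they have distinct images), under which your covering-space argument goes through cleanly: $L=L'$ forces $\gamma=\gamma'$.
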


\subsection{Geometrical estimates}

The following lemma describes the asymptotic behaviour of the injectivity radius $\iota(M)$ of a $K$-quasiconformally homogeneous 
hyperbolic surface in terms of $K$, see~\cite{bonfert_1}.

\begin{lem}\label{lem_bounds_inj_rad}
Let $M$ be a $K$-quasiconformally homogeneous hyperbolic surface and $\iota(M)$ its injectivity radius. Then $\iota(M)$ is uniformly bounded from below (for $K$ bounded from above) and $\iota(M) \ra \infty$ for $K \ra 1$. 
\end{lem}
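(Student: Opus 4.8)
The plan is to prove the two assertions in turn --- first that $\iota(M)$ is bounded below by a positive constant depending only on an upper bound for $K$, and then that $\iota(M)\to\infty$ as $K\to1$, using the first to feed into the second. Both are proved by contradiction, and both rest on the single principle that a $K$-quasiconformal homeomorphism distorts the conformal modulus of a ring domain by at most the factor $K$, and in the limit $K\to1$ does not distort it at all.

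For the lower bound, suppose $M$ is $K_0$-quasiconformally homogeneous with $\iota(M)$ small. I would pick $p$ with $\iota_M(p)$ close to $\iota(M)$; then $p$ lies in the thin part of $M$, hence in the standard collar of a simple closed geodesic $\delta$ with $\ell_M(\delta)\le 2\iota_M(p)$ (the case of a cusp being handled the same way). Moving $p$ onto $\delta$, we may assume $p$ lies on an essential simple closed curve which, inside its standard collar, bounds two embedded annuli in $M$ of modulus at least $m$, where $m=m(\iota(M))\to\infty$ as $\iota(M)\to0$ (the standard collar of a geodesic of length $\ell$ has modulus $\sim\pi/\ell$). For any $q\in M$, choose $f\in\FF_{K_0}(M)$ with $f(p)=q$; then $f$ carries this configuration to an essential simple closed curve through $q$ bounding two embedded annuli of modulus at least $m/K_0$. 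Comparing these with the cyclic cover $\D^2/\langle\gamma\rangle$ of their common core geodesic $\gamma$ bounds $\ell_M(\gamma)$ above by a constant times $K_0/m$, and the length formula for equidistant curves then bounds $\iota_M(q)$ above by a constant times $K_0/m$. Thus, once $\iota(M)$ is small enough in terms of $K_0$, every point of $M$ is thin --- which is impossible, since the thin part of a complete hyperbolic surface, when nonempty and proper, has nonempty frontier in $M$, and can exhaust $M$ only if $M$ is a hyperbolic cylinder, whose funnels contain points of arbitrarily large injectivity radius. This contradiction gives the lower bound.

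For the statement $\iota(M)\to\infty$ as $K\to1$, I would argue by contradiction via geometric limits. Assume there are $K_j\to1$ and $K_j$-quasiconformally homogeneous $M_j=\D^2/\Gamma_j$ with $\iota(M_j)\le C<\infty$; by the lower bound just proved, $\iota(M_j)\ge c>0$ for large $j$, so the $\Gamma_j$ are torsion-free discrete with injectivity radius bounded below. Choosing base points $o_j\in M_j$ with $\iota_{M_j}(o_j)\le C+1$ and normalizing a lift to $0\in\D^2$, a subsequence of the $\Gamma_j$ converges geometrically to a torsion-free discrete group $\Gamma_\infty$, and the pointed surfaces $(M_j,o_j)$ converge to $(M_\infty,o_\infty)=(\D^2/\Gamma_\infty,o_\infty)$ with $\iota_{M_\infty}(o_\infty)\le C+1<\infty$; in particular $M_\infty\ne\D^2$. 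I would then show $M_\infty$ is conformally homogeneous: given $a,b\in M_\infty$, approximate by $a_j,b_j\in M_j$, take $g_j\in\FF_{K_j}(M_j)$ with $g_j(a_j)=b_j$, lift each $g_j$ to a $\Gamma_j$-compatible $K_j$-quasiconformal self-map of $\D^2$ normalized by an isometry to send a chosen lift of $a_j$ to one of $b_j$, and extract a locally uniform limit; since $K_j\to1$ this limit is $1$-quasiconformal, hence conformal, it is $\Gamma_\infty$-compatible by passing to the geometric limit, and it descends to a conformal automorphism of $M_\infty$ carrying $a$ to $b$. Hence $\mathrm{Aut}(M_\infty)$ is transitive; but the only conformally homogeneous hyperbolic Riemann surface is $\D^2$ (cf.\ Section~\ref{sec_quasi_results}), contradicting $M_\infty\ne\D^2$.

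The routine ingredients --- the sharp collar width and the modulus of the standard collar, the length formula for equidistant curves, compactness of normal families of $K$-quasiconformal maps, and geometric convergence of Fuchsian groups --- I would take off the shelf. I expect the main obstacle to be the compactness bookkeeping in the second part: the homogenizing maps $g_j$ are not canonical, and one must verify that after lifting and normalizing they genuinely subconverge, that the limit is compatible with $\Gamma_\infty$ and descends to $M_\infty$, and hence that transitivity of each $\FF_{K_j}(M_j)$ really passes to transitivity of $\mathrm{Aut}(M_\infty)$. In the first part the delicate point is the elementary estimate that a point at the ``centre'' of an embedded ring domain of large modulus must have small injectivity radius --- this is the mechanism that converts modulus control into injectivity-radius control.
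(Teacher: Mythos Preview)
The paper does not give a proof of this lemma; it is quoted directly from Bonfert-Taylor, Canary, Martin and Taylor~\cite{bonfert_1}. Your proposal therefore supplies an argument where the paper supplies only a citation, and both halves of your outline are sound and broadly in the spirit of that reference. For the lower bound, the step you flag as delicate --- that a point $q$ on a separating curve in an embedded annulus, with modulus at least $m'$ on each side, has injectivity radius $O(1/m')$ --- is justified by Teichm\"uller's Modulsatz: the sub-annulus on one side separates the inner boundary component of the ambient annulus $A$ from $q$, which bounds its modulus above in terms of the position of $q$ in the round model; inverting, $q$ is forced into the conformal ``middle'' of $A$, whence the shortest essential loop through $q$ in the $A$-hyperbolic metric has length $O(1/m')$, and then $\iota_M(q)$ is bounded by the same quantity via the Schwarz--Pick comparison $d_M\le d_A$. (A more direct route, closer to~\cite{bonfert_1}, starts instead from a universal thick point on the boundary of the Margulis thin part and transports it everywhere, using that $K$-quasiconformal self-maps of $\D^2$ distort hyperbolic distances by a function of $K$ alone.) Your geometric-limit argument for $\iota(M)\to\infty$ as $K\to1$ is correct, and the compactness bookkeeping you anticipate --- lifting, normalising, and passing the $\Gamma_j$-compatibility to the limit --- is indeed routine once the uniform lower bound from the first part is in hand; the punchline $M_\infty=\D^2$ is exactly the Gehring--Palka classification invoked in Section~\ref{sec_quasi_results}.
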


Consequently, $\lambda(M)$ is uniformly bounded from below and $\lambda(M) \ra \infty$ for $K \ra 1$. 
We fix $K_0 > 1$ such that $\lambda(M) \geq 10$ for every $K$-quasiconformally homogeneous hyperbolic surface $M$ with $K \leq K_0$. 

\begin{rem}
If $M$ is a $K$-quasiconformally homogeneous hyperbolic surface, then the fact that $\lambda(M) >0$ implies $M$ has no cusps and therefore, 
any essential simple closed curve $\alpha \subset M$ has a unique simple closed geodesic representative $\gamma \subset M$, 
see e.g.~\cite{thurston}. In particular, if $\gamma \subset M$ is a simple closed geodesic and $f \colon M \ra M$ a homeomorphism, then the 
closed geodesic homotopic to $f(\gamma)$ exists and is simple.
\end{rem}

A {\em pair of pants} is a surface homeomorphic to the sphere $\PP^1$ with the interior of three mutually disjoint closed topological disks removed. Geometrically, it is the surface obtained by gluing two hyperbolic hexagons along their seams. 

In what follows, we denote by $| \gamma |$ the hyperbolic length of a piecewise geodesic curve $\gamma \subset M$. Here by piecewise geodesic curve we mean a finite concatenation of geodesics arcs. We have the following uniform estimate on lengths of simple closed geodesics, see~\cite[Theorem 4.3.3]{fletcher}.

\begin{lem}\label{lem_length_geo}
Let $M$ be a hyperbolic surface and $\gamma$ a simple closed geodesic. Let $f \colon M \ra M$ a $K$-quasiconformal homeomorphism
and $\gamma'$ the simple closed geodesic homotopic to $f(\gamma)$. Then 
\begin{equation}
\frac{1}{K} |\gamma| \leq | \gamma' | \leq K | \gamma |.
\end{equation}
\end{lem}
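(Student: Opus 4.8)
The plan is to pass to the universal cover and reduce the inequality to the behaviour of moduli of annuli under quasiconformal maps. Uniformize $M = \D^2/\Gamma$ with $\Gamma$ a Fuchsian group, and lift $f$ to a $K$-quasiconformal homeomorphism $\tilde f \colon \D^2 \ra \D^2$. Since $f$ is a homeomorphism of $M$, for every $g \in \Gamma$ the map $\tilde f \circ g \circ \tilde f^{-1}$ projects to the identity on $M$, hence lies in $\Gamma$; thus $g \mapsto \phi(g) := \tilde f g \tilde f^{-1}$ is an automorphism of $\Gamma$ and $\tilde f \circ g = \phi(g) \circ \tilde f$ for all $g \in \Gamma$, so by induction $\tilde f \circ g^n = \phi(g)^n \circ \tilde f$. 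The geodesic $\gamma$ is the image in $M$ of the axis of a primitive hyperbolic element $g \in \Gamma$, and $|\gamma|$ equals the translation length $\ell(g)$ of $g$; likewise the geodesic $\gamma'$ homotopic to $f(\gamma)$ is the image of the axis of $\phi(g)$, so $|\gamma'| = \ell(\phi(g))$. (Here one uses that free homotopy classes of closed curves correspond to conjugacy classes in $\Gamma$ and that $\ell$ is a conjugacy invariant, so the answer does not depend on the choices made.)

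Next I would compute the relevant moduli. For a primitive hyperbolic $h \in \mathrm{PSL}_2(\R)$ of translation length $\ell$, conjugate in $\mathrm{PSL}_2(\R)$ so that $h$ acts on the upper half-plane $\HH$ by $z \mapsto e^{\ell} z$. Then $\HH/\langle h\rangle$ is conformally the quotient of the strip $\{0 < \mathrm{Im}\, w < \pi\}$ by the translation $w \sim w + \ell$, and the map $w \mapsto e^{2\pi i w/\ell}$ identifies this quotient with the round annulus $\{ e^{-2\pi^2/\ell} < |\zeta| < 1 \}$, which has modulus $\pi/\ell$. Applying this to $g$ and to $\phi(g)$ produces annuli $A = \D^2/\langle g\rangle$ and $A' = \D^2/\langle \phi(g)\rangle$ with $\Mod(A) = \pi/|\gamma|$ and $\Mod(A') = \pi/|\gamma'|$.

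Finally, since $\tilde f$ carries $\langle g\rangle$-orbits bijectively onto $\langle \phi(g)\rangle$-orbits, it descends to a homeomorphism $\hat f \colon A \ra A'$; quasiconformality is a local property, so $\hat f$ is again $K$-quasiconformal. The standard fact that a $K$-quasiconformal homeomorphism of ring domains distorts the modulus by a factor of at most $K$ then gives $\tfrac{1}{K}\Mod(A) \leq \Mod(A') \leq K\,\Mod(A)$, i.e. $\tfrac{1}{K}\cdot\tfrac{\pi}{|\gamma|} \leq \tfrac{\pi}{|\gamma'|} \leq K\cdot\tfrac{\pi}{|\gamma|}$, and rearranging yields $\tfrac{1}{K}|\gamma| \leq |\gamma'| \leq K|\gamma|$. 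The only step that needs genuine care is the bookkeeping in the first paragraph — the passage between based and free homotopy and the correct identification of $\gamma'$ with the axis of $\phi(g)$ — while the modulus computation and the distortion estimate are routine; an alternative to the modulus argument would be to work instead with the maximal embedded collars of $\gamma$ and $\gamma'$, whose moduli are explicit functions of the geodesic lengths, but the quotient-annulus formulation above is cleaner.
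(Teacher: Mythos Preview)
Your argument is correct: the identification $\Mod(\D^2/\langle g\rangle)=\pi/\ell(g)$ together with the $K$-quasi-invariance of conformal modulus is exactly the standard route to this inequality. The paper does not supply its own proof of this lemma but simply cites \cite[Theorem~4.3.3]{fletcher}; the proof there is the same extremal-length/modulus argument you have written, so there is nothing to contrast.
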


Further, we use the following classical result in the geometry of hyperbolic surfaces. 

\begin{collar}
Set $m(\ell) = \arcsin(1/(\sinh(\ell/2)))$. For a simple closed geodesic $\gamma \subset M$ of length $\ell = |\gamma|$, the set
\begin{equation}
A(\gamma) = \{p \in M : d(p, \gamma) < m(\ell) \}
\end{equation}
is an embedded annular neighbourhood of $\gamma$.
\end{collar}

In the next two lemma's we recollect uniform approximation estimates of $K$-quasiconformal homeomorphisms, in particular the behaviour 
when $K \ra 1$, see e.g.~\cite{fletcher}.

\begin{lem}\label{lem_approx_homeo_disk}
For every $K \geq 1$ and $0 < \rho < 1$, there exists a constant $C_1(K,\rho)$, depending only on $K$ and $\rho$, such that if 
$f \colon \D^2 \ra \D^2$ is a $K$-quasiconformal homeomorphism and $D(p, \rho) \subset \D^2$ the closed hyperbolic disk of radius $\rho$ 
centered at $p \in \D^2$, there exists a M\"obius transformation $\mu \in \Mob(\D^2)$ such that
\begin{equation}\label{eq_prelim_approx_1}
\widetilde{d}( f(q), \mu(q) ) \leq C_1(K, \rho)
\end{equation}
for all $q \in  D(p, \rho)$. For fixed $\rho >0$, we have that $C_1(K, \rho) \ra 0$ for $K \ra 1$.
\end{lem}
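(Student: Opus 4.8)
The plan is to reduce to the normalized case and then invoke compactness of families of $K$-quasiconformal maps. First I would pre-compose and post-compose $f$ with Möbius transformations of $\D^2$ so that, writing $g$ for the resulting map, we have $g(0) = 0$ and we may assume $p = 0$ by a further conjugation (the statement is Möbius-invariant in the sense that $\widetilde d$ is the hyperbolic metric, which is Möbius-invariant). Thus it suffices to prove: for every $K \geq 1$ and $0 < \rho < 1$ there is $C_1(K,\rho)$ so that every $K$-quasiconformal $g \colon \D^2 \to \D^2$ fixing $0$ satisfies $\widetilde d(g(q), q) \leq C_1(K,\rho)$ for all $q \in D(0,\rho)$, where now the Möbius transformation $\mu$ in the original formulation is the composition of the normalizing maps. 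Since $D(0,\rho)$ is a Euclidean disk of radius $r = \tanh(\rho/2) < 1$ about the origin, and $\widetilde d$ is comparable to the Euclidean metric on any compact subset of $\D^2$ with constants depending only on $r$, it further suffices to get a Euclidean bound $|g(q) - q| \leq C'(K,r)$ on $D(0,\rho)$.

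The main step is the uniform Euclidean estimate, which follows from the theory of normal families of quasiconformal maps. The family $\FF$ of all $K$-quasiconformal self-homeomorphisms of $\D^2$ fixing $0$ is a normal family (by the distortion theorems for quasiconformal maps, e.g.\ Mori's theorem and the equicontinuity estimates in~\cite{fletcher}), and moreover it is \emph{uniformly} equicontinuous on the compact set $D(0,\rho)$: there is a function $\omega_{K,\rho}(t) \to 0$ as $t \to 0$ such that $|g(q) - g(q')| \leq \omega_{K,\rho}(|q - q'|)$ for all $g \in \FF$ and $q, q' \in D(0,\rho)$. Applying this with $q' = 0$ and using $g(0) = 0$ gives $|g(q)| \leq \omega_{K,\rho}(|q|) \leq \omega_{K,\rho}(r)$, so $|g(q) - q| \leq \omega_{K,\rho}(r) + r =: C'(K,r)$, and translating back through the comparability of metrics yields $C_1(K,\rho)$.

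For the limiting behaviour as $K \to 1$, the key point is that a $1$-quasiconformal self-homeomorphism of $\D^2$ is conformal, hence a Möbius transformation; together with a $1$-quasiconformal map fixing $0$, that forces it to be a rotation, which is an isometry of $\D^2$. Suppose for contradiction that $C_1(K,\rho)$ does not tend to $0$: then there is $\varepsilon > 0$, a sequence $K_n \to 1$, maps $f_n$ that are $K_n$-quasiconformal, and points $q_n \in D(p_n,\rho)$ with $\widetilde d(f_n(q_n), \mu_n(q_n)) \geq \varepsilon$ for every admissible choice of $\mu_n$. Normalizing as above to get $g_n$ fixing $0$, by normality we may pass to a subsequence with $g_n \to g$ locally uniformly; since $g_n$ is $K_n$-quasiconformal with $K_n \to 1$, the limit $g$ is $1$-quasiconformal, hence a rotation, hence an isometry of $\D^2$. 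But then $\widetilde d(g_n(q), q) \to \widetilde d(g(q), q) = 0$ uniformly on $D(0,\rho)$ (using that the $g_n$ converge uniformly on this compact set and that $g$ is an isometry fixing $0$), which after undoing the normalization contradicts the lower bound $\varepsilon$ (the rotation $g$ can be absorbed into the choice of $\mu_n$). The main obstacle is making the compactness/normal-family input precise with the \emph{uniformity in $p$ and in $K \leq K_0$} that the statement demands; this is handled by the Möbius normalization, which removes the dependence on $p$ entirely, and by the fact that $K$-quasiconformal distortion bounds are monotone in $K$, so a bound for $K_0$ dominates all $K \leq K_0$.
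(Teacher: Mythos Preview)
Your approach is the same as the paper's---normalize by M\"obius transformations and invoke compactness of normal families of $K$-quasiconformal self-maps of $\D^2$---but the execution has a gap stemming from your one-point normalization. You normalize $g$ to fix only $0$, which leaves a rotational ambiguity: every rotation is a $1$-quasiconformal self-map of $\D^2$ fixing $0$. Consequently your reduction ``it suffices to prove $\widetilde d(g(q),q)\le C_1(K,\rho)$ for all $g$ fixing $0$'' cannot yield $C_1\to 0$ as $K\to 1$, since nontrivial rotations already violate it. The same issue surfaces in your contradiction argument: the subsequential limit $g$ is only known to be a rotation, so the assertion ``$\widetilde d(g(q),q)=0$'' is false in general. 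Your parenthetical ``the rotation $g$ can be absorbed into the choice of $\mu_n$'' is exactly the right fix---one should conclude $\widetilde d(g_n(q),g(q))\to 0$ uniformly on $D(0,\rho)$ and take $\mu_n$ to be the composite of the normalizing maps with $g$---but as written the argument passes through a false step.

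The paper sidesteps this by normalizing to fix \emph{two} points, $0$ and the boundary point $1$ (using that a $K$-quasiconformal self-map of $\D^2$ extends to the boundary). The only M\"obius self-map of $\D^2$ fixing both is the identity, so the family of $K$-quasiconformal self-maps with $f(0)=0$ and $f(1)=1$ is normal and any locally uniform limit as $K\to 1$ is forced to be the identity; then $\widetilde d(f(z),z)\le C_1(K,\rho)$ with $C_1\to 0$ follows directly from the standard normal-family argument, and the M\"obius transformation $\mu$ in the original statement is the composite of the two normalizing maps. Adding this second normalization---postcompose your $g$ with the rotation sending $g(1)$ to $1$---repairs your argument with no further changes.
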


\begin{proof}
By normalizing with suitable M\"obius transformations, we may assume that $f(0) = 0$ and $f(1) = 1$.  As the family of $K$-quasiconformal homeomorphisms of $\D^2$ onto itself fixing $0$ and $1$ in $\D^2$ is a normal family, see e.g.~\cite[p. 32]{ahlfors}, by a standard argument of uniform convergence on compact subsets, there exists a function $C_1(K,\rho)$ with $C_1(K, \rho) \ra 0$ if $K \ra 1$ such that $\widetilde{d}(f(z), z) \leq C_1(K, \rho)$.  Thus~\eqref{eq_prelim_approx_1} follows.
\end{proof}

Further, we will utilize the following, see e.g.~\cite[Lemma 2]{buser}.

\begin{lem}\label{lem_approx_geodesic}
For every $K \geq 1$, there exists a constant $C_2(K)$ depending only on $K$ with the following property. 
Let $M$ be a hyperbolic surface and $\gamma \subset M$ a simple closed geodesic.  
If $f \colon M \ra M$ is a $K$-quasiconformal homeomorphism, then the geodesic $\gamma'$ homotopic to 
$f(\gamma)$ has the property that 
\begin{equation}
d( f(\gamma), \gamma' ) \leq C_2(K).
\end{equation}
Furthermore, $C_2(K) \ra 0$ as $K \ra 1$.
\end{lem}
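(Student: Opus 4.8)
The plan is to pass to the universal cover, identify $f(\gamma)$ as the projection of a quasigeodesic in $\D^2$, and then invoke the stability of quasigeodesics in a Gromov hyperbolic space. Write $M=\D^2\slash\Gamma$ and fix a lift $\widetilde{\gamma}\subset\D^2$ of $\gamma$; it is the axis of a hyperbolic element $g\in\Gamma$. Lift $f$ to a $K$-quasiconformal homeomorphism $\widetilde{f}\colon\D^2\ra\D^2$. Since $\widetilde f$ covers a self-homeomorphism of $M$, conjugation $h\mapsto\widetilde f h\widetilde f^{-1}$ is an automorphism of $\Gamma$; in particular $g'=\widetilde{f}\,g\,\widetilde{f}^{-1}\in\Gamma$, it is hyperbolic, its axis $\widetilde{\gamma}'$ projects to the closed geodesic $\gamma'$ homotopic to $f(\gamma)$, and from $g'\widetilde{f}=\widetilde{f}g$ the curve $\widetilde{f}(\widetilde{\gamma})$ is $g'$-invariant and therefore has the same pair of endpoints on $\partial\D^2$ as $\widetilde{\gamma}'$. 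Because the covering projection is a local isometry and every point of $f(\gamma)\subset M$ is the image of some $\widetilde{f}(x)$ with $x\in\widetilde{\gamma}$, it suffices to bound $\sup_{x\in\widetilde{\gamma}}\widetilde{d}(\widetilde{f}(x),\widetilde{\gamma}')$ by a quantity $C_2(K)$ with $C_2(K)\ra 0$ as $K\ra 1$.

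Next I would record that a $K$-quasiconformal self-map of $\D^2$ is an $(L(K),A(K))$-quasi-isometry of $(\D^2,\widetilde{d})$, that is,
\[
\frac{1}{L(K)}\,\widetilde{d}(x,y)-A(K)\ \le\ \widetilde{d}(\widetilde{f}(x),\widetilde{f}(y))\ \le\ L(K)\,\widetilde{d}(x,y)+A(K),
\]
with $L(K)\ra 1$ and $A(K)\ra 0$ as $K\ra 1$. This is classical: $\widetilde{f}$ extends to a quasisymmetric homeomorphism of $\partial\D^2$ whose distortion is governed by $K$ and tends to the identity as $K\ra 1$, while hyperbolic distance in $\D^2$ is controlled by cross-ratios of boundary points; alternatively one chains the local M\"obius approximation of Lemma~\ref{lem_approx_homeo_disk} (using that M\"obius maps are hyperbolic isometries) along a geodesic. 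Consequently, parametrizing $\widetilde{\gamma}$ by arclength, the map $t\mapsto\widetilde{f}(\widetilde{\gamma}(t))$ is an $(L(K),A(K))$-quasigeodesic in $\D^2$.

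Finally, $\D^2$ with its hyperbolic metric is $\delta$-hyperbolic, and by the stability of quasigeodesics (the Morse Lemma) an $(L,A)$-quasigeodesic lies within Hausdorff distance $R(L,A)$ of the geodesic joining its two endpoints on $\partial\D^2$, where $R(L,A)$ depends only on $L$, $A$ (and $\delta$) and satisfies $R(L,A)\ra 0$ as $(L,A)\ra(1,0)$. Applied to $\widetilde{f}(\widetilde{\gamma})$ and $\widetilde{\gamma}'$, which share the same endpoints, this gives $\widetilde{d}(\widetilde{f}(x),\widetilde{\gamma}')\le R(L(K),A(K))=:C_2(K)$ for every $x\in\widetilde{\gamma}$, with $C_2(K)\ra 0$ as $K\ra 1$; projecting to $M$ yields $d(f(\gamma),\gamma')\le C_2(K)$. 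The qualitative chain (quasiconformal $\Rightarrow$ quasi-isometry $\Rightarrow$ quasigeodesic $\Rightarrow$ bounded Hausdorff distance from a geodesic) is routine; the genuine work, and the main obstacle, is the quantitative refinement — making the constants uniform in $M$ and $\gamma$ and continuous at $K=1$, i.e. that the quasi-isometry constants of a $K$-quasiconformal map degenerate to $(1,0)$ and that the Morse constant is continuous there. I would extract these quantitative statements from~\cite{buser} and~\cite{fletcher} rather than re-derive them.
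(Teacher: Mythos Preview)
The paper does not prove this lemma; it simply quotes it from \cite[Lemma~2]{buser} without argument. Your sketch is the standard route and is correct: lift to $\D^2$, note that $\widetilde f(\widetilde\gamma)$ is $g'$-invariant and hence shares boundary endpoints with the axis $\widetilde\gamma'$, use that a $K$-quasiconformal self-map of $\D^2$ is an $(L(K),A(K))$-quasi-isometry for the hyperbolic metric with $(L,A)\to(1,0)$ as $K\to 1$, and then apply Morse stability of quasigeodesics. The one point needing care is precisely the one you flag at the end --- that the stability constant $R(L,A)$ tends to $0$ as $(L,A)\to(1,0)$; the usual coarse Morse bounds in a $\delta$-hyperbolic space do not give this. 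In $\D^2$ it follows from convexity: nearest-point projection onto a geodesic contracts arclength by the factor $1/\cosh r$ at distance $r$, so an $(L,A)$-quasigeodesic cannot run at distance $r$ with $\cosh r>L$ over an arbitrarily long stretch, which forces $r\to 0$ as $(L,A)\to(1,0)$. Since the paper itself defers the entire statement to \cite{buser}, your proposal in fact supplies more than the paper does.
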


\subsection{Non-maximal surfaces of positive genus}

Using the geometrical estimates derived in the previous section, let us first prove the claim made in the introduction that non-maximal Riemann surfaces of 
genus $1 \leq g \leq \infty$ are not $K$-quasiconformally homogeneous for any finite $K \geq 1$.

\begin{prop}[Non-maximal surfaces of positive genus]\label{prop_genus_high}
Let $M$ be a non-maximal surface of genus $1 \leq g \leq \infty$. Then $M$ is not $K$-quasiconformally homogeneous for any $K \geq 1$.
\end{prop}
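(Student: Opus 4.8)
The plan is to exploit the fact that a non-maximal surface $M$ of positive genus embeds as a proper subregion of some Riemann surface $M_0$, and to combine this with the lower bound on $\lambda(M)$ coming from $K$-quasiconformal homogeneity (Lemma~\ref{lem_bounds_inj_rad}) to produce arbitrarily short geodesics, a contradiction. Since $M$ has genus $g\geq 1$, it carries a nonseparating simple closed curve; more precisely, fix a point $p_0$ lying in the boundary of $M$ inside $M_0$ (i.e. a point of $M_0\setminus M$ that is a limit of points of $M$), and observe that for every $\varepsilon>0$ there is an essential simple closed curve $\alpha_\varepsilon\subset M$ that bounds, in $M_0$, a disk (or more generally a small region) of $M_0$-diameter less than $\varepsilon$ around $p_0$ — such a curve exists because $M$ accumulates on the proper subregion boundary, and a small loop in $M_0$ around a boundary point meets $M$ in a curve which is essential in $M$ (it is non-contractible in $M$ since it is separated from the rest of $M$ by boundary). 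The hyperbolic length in $M$ of the geodesic representative $\gamma_\varepsilon$ of $\alpha_\varepsilon$ then tends to $0$ as $\varepsilon\to 0$, because the $M_0$-small region it bounds forces a thin collar (one can compare with the hyperbolic metric on an annulus of large modulus, or invoke the Collar Lemma in reverse: a geodesic of length bounded below has a definite collar, which cannot fit in an arbitrarily $M_0$-small neighbourhood of $p_0$).

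First I would make precise the claim that an essential simple closed curve of arbitrarily small hyperbolic length exists on $M$. The cleanest route is monotonicity of the hyperbolic metric under inclusion: if $M\subset M_0$ properly and $p_0\in\partial M$, take round $M_0$-balls $B_r(p_0)$; for small $r$ the set $M\cap \partial B_r(p_0)$ contains a simple closed curve $\alpha_r$ that is essential in $M$ (it cannot bound a disk in $M$, else $M$ would not accumulate at $p_0$ from the side cut off). Shrinking $M_0$ slightly if necessary we may assume $M_0$ is itself hyperbolic, or pass to a hyperbolic subsurface; then the hyperbolic length of $\alpha_r$ in $M_0$ is $O(r)$, and since the inclusion $M\hookrightarrow M_0$ is distance-decreasing for the respective hyperbolic metrics, the $M$-length of $\alpha_r$ is likewise $O(r)$, hence so is the length of its geodesic representative $\gamma_r\subset M$. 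Therefore $\lambda(M)=0$.

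Next I would derive the contradiction. By Lemma~\ref{lem_bounds_inj_rad}, if $M$ is $K$-quasiconformally homogeneous for some finite $K$, then $\iota(M)$ is bounded below by a positive constant, whence $\lambda(M)\geq 2\iota(M)>0$. This contradicts $\lambda(M)=0$ established above, so no finite $K$ can work, proving the proposition. (One should also note that $M$ being non-maximal of positive genus is genuinely hyperbolic, so that this framework applies: the only non-hyperbolic surfaces are $\PP^1,\C,\C^*,\D^2$ and the torus, none of which is a non-maximal positive-genus surface.)

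The main obstacle is Step~1: verifying rigorously that a proper inclusion $M\subsetneq M_0$ forces an \emph{essential} simple closed curve of arbitrarily small hyperbolic length in $M$. Two subtleties must be handled. First, one must ensure the small curve $\alpha_r$ cut out near a boundary point is essential in $M$ — this uses that $M$ is a proper subregion, so the complement $M_0\setminus M$ has a component meeting every neighbourhood of $p_0$, and a curve separating that complementary piece from the bulk of $M$ is non-contractible in $M$. Second, one must control the hyperbolic geometry: I would either use monotonicity of hyperbolic metrics under holomorphic inclusion together with the fact that on a fixed hyperbolic $M_0$ short Euclidean loops have short hyperbolic length, or, to keep things elementary, argue directly that a standard $\varepsilon$-neighbourhood of $p_0$ in a local coordinate chart is quasiconformally (indeed conformally) a punctured disk or annulus, on which one reads off directly that the corresponding geodesic in $M$ has length $\to 0$. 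Once $\lambda(M)=0$ is in hand, the rest is immediate from Lemma~\ref{lem_bounds_inj_rad}.
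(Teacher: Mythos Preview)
Your approach has a genuine gap: the claim that $\lambda(M)=0$ for every non-maximal positive-genus $M$ is false, so the contradiction with Lemma~\ref{lem_bounds_inj_rad} never materialises. For a concrete counterexample, take $M_0$ a closed genus~$2$ surface and $M = M_0 \setminus \overline{D}$ for a small closed disk; then $M$ is a finite-type hyperbolic surface with one funnel end and both $\iota(M)>0$ and $\lambda(M)>0$. The specific error is the direction of Schwarz--Pick monotonicity: the inclusion $M \hookrightarrow M_0$ being distance-decreasing means $d_{M_0} \leq d_M$, so the $M$-metric \emph{dominates} the restricted $M_0$-metric, not the reverse. A curve that is short in $M_0$ may be very long in $M$; indeed near the ideal boundary the $M$-metric blows up. Your alternative route via an embedded annulus $A \subset M$ does yield the correct inequality $\ell_M \leq \ell_A$, but the core curve of such an annulus about a boundary point is peripheral, and either has no geodesic representative at all (cusp end) or a representative of fixed positive length (funnel end), so $\lambda(M)=0$ still does not follow.

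The paper's argument uses precisely the ingredient you mention in your first sentence but then abandon: a non-separating simple closed geodesic $\gamma \subset M$, available because $g\ge 1$. One transports $\gamma$ by elements $f_n \in \FF_K(M)$ to points $p_n$ lying in a small $M_0$-disk $D$ about an ideal boundary point, chosen so that $d_M(p_n,\partial D)\to\infty$. Since $D\cap M$ is planar it cannot contain a non-separating curve, so each $f_n(\gamma)$ must meet $\partial D$; by Lemma~\ref{lem_approx_geodesic} the geodesic representative $\gamma_n$ does too, forcing $|\gamma_n|\to\infty$. This contradicts Lemma~\ref{lem_length_geo}, which gives $|\gamma_n|\le K|\gamma|$. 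Note that the contradiction comes from geodesics becoming arbitrarily \emph{long}, not short --- exactly the opposite of what your strategy aims for.
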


\begin{proof}
To derive a contradiction, assume that $M$ is $K$-quasiconformally homogeneous for some finite $K \geq 1$. As $M$ is non-maximal, $M$ is 
embedded in a maximal hyperbolic surface $M_0$ of genus $g \geq 1$. Let $\bar{p} \in M_0$ be an ideal boundary point of $M$ and let $D \subset M_0$ 
be a small closed disk embedded in $M_0$ and centered at $\bar{p}$. There exists a sequence of points $p_n \in M \cap D$ 
so that $d_0(p_n, \bar{p}) \ra 0$ (where $d_0$ is the metric on $M_0$) and thus $d(p_n, \partial D) \ra \infty$, as $\bar{p}$ is in the ideal 
boundary of $M$. On the other hand, as $g \geq 1$, there exists a non-separating simple closed geodesic $\gamma \subset M$. Mark a point 
$p \in \gamma$. By transitivity of the family $\FF_K(M)$, for every $n \geq 1$, there exists an element $f_n \in \FF_K(M)$ such that $f_n(p) = p_n$. Therefore the simple closed curve $f_n(\gamma)$ is non-separating for every $n \geq 1$ and thus
\begin{equation}\label{lem_length_gamma_*}
f_n(\gamma) \cap \partial D \neq \emptyset.
\end{equation}
Indeed, otherwise we have that $f_n(\gamma) \subset M \cap D$, implying that $f_n(\gamma)$ is separating, as $D$ is an embedded disk in the maximal surface $M_0$ and thus the connected components of $M \cap D$ are planar subsurfaces, contradicting that $\gamma$ (and thus $f_n(\gamma)$) is non-separating. 
By Lemma~\ref{lem_approx_geodesic}, the geodesic $\gamma_n$ homotopic to $f^n (\gamma)$ has to stay within a bounded distance of $f_n(\gamma)$ 
and therefore, for $n$ large enough, the geodesic $\gamma_n$ has the property that 
\begin{equation}\label{lem_length_gamma_**}
\gamma_n \cap \partial D \neq \emptyset
\end{equation}
by~\eqref{lem_length_gamma_*}. It follows from~\eqref{lem_length_gamma_**} that
\begin{equation}
|\gamma_n| \geq 2 ( d(p_n, \partial D) - C_2(K) ) 
\end{equation}
with $C_2(K)$ the uniform constant of Lemma~\ref{lem_approx_geodesic}; put in words, the geodesic $\gamma_n$ has to enter $D \cap M$, 
pass close to $p_n$, and leave $D \cap M$ again. As $d(p_n, \partial D) \ra \infty$ for $n \ra \infty$, it follows that $| \gamma_n | \ra \infty$ for $n \ra \infty$, contradicting Lemma~\ref{lem_length_geo}. Thus $M$ can not be $K$-quasiconformally homogeneous for any finite $K \geq 1$. 
\end{proof}

\section{Quasiconformal homogeneity of genus zero surfaces}

In the remainder of our proof, let $K_0>1$ as defined in section~\ref{sec_quasi_geo_est} and let $M$ be a $K$-quasiconformally homogeneous hyperbolic genus zero surface, 
with $1 < K \leq K_0$, and $\FF_K(M)$ the family of all $K$-quasiconformal homeomorphisms of $M$, which is transitive by homogeneity of $M$.

\subsection{The two-circle Lemma}\label{subsec_two_circle}

In what follows, we focus on {\em short geodesics}\index{short geodesics}, in the following sense.

\begin{defn}[$\delta$-short geodesics]
Given $\delta > 0$, a simple closed geodesic $\gamma \subset M$ is said to be {\em $\delta$-short} if $| \gamma | \leq (1+ \delta) \lambda(M)$.
\end{defn}

By Lemma~\ref{lem_bounds_inj_rad}, and the remark following it, for every $K \leq K_0$, there is a uniform lower bound on the length 
of simple closed geodesics on $M$. Fix 
\begin{equation}\label{eq_fix_delta_0}
\delta_0 = \frac{1}{378}.
\end{equation}

\begin{defn}[Two-circle configuration]\index{two-circle configuration}
A {\em two-circle configuration} is a union of two $\delta_0$-short geodesics $\gamma_1, \gamma_2 \in M$ such that $\gamma_1$ and $\gamma_2$ 
intersect in exactly two points $p_1, p_2 \in M$. 
\end{defn}

Topologically a two-circle configuration is a union of two simple closed curves $\gamma_1, \gamma_2$ in the surface $M$, intersecting transversely in exactly two points and 
\[ M \setminus (\gamma_1 \cup \gamma_2) \] 
consists of four connected components and the boundary of each component consists of two arcs. For future reference, let us label the four arcs $\eta_1, \eta_2 \subset \gamma_1$ and $\eta_3, \eta_4 \subset \gamma_2$ connecting the two intersection points $p_1$ and $p_2$, see Figure~\ref{fig_two_circle}.

\begin{figure}[h]
\begin{center}
\psfrag{gamma_1}{$\gamma_1$}
\psfrag{gamma_2}{$\gamma_2$}
\psfrag{eta_1}{$\eta_1$}
\psfrag{eta_1'}{$\eta_2$}
\psfrag{eta_2}{$\eta_3$}
\psfrag{eta_2'}{$\eta_4$}
\psfrag{p_1}{$p_1$}
\psfrag{p_2}{$p_2$}
\includegraphics[scale=0.7]{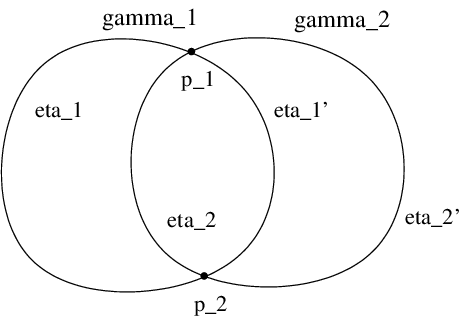}
\caption{A two-circle configuration with labeling.}
\label{fig_two_circle}
\end{center}
\end{figure}

First, the following result, see also~\cite[Proposition 4.6]{parlier}.

\begin{lem}\label{lem_pants_disk}
There exists a uniform constant $r_0 > 0$ such that for a pair of pants $P \subset M$, there exists a $p \in P$ such that
$D(p, r_0) \subset P$. 
\end{lem}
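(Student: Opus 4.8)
The goal is to find a uniform lower bound $r_0>0$ on the inradius of any pair of pants $P\subset M$. Here $P$ is an embedded hyperbolic subsurface of $M$ whose boundary components are geodesics (or possibly horocycles, but the remark following Lemma~\ref{lem_bounds_inj_rad} rules out cusps), and crucially every boundary geodesic of $P$ is a simple closed geodesic of $M$, hence has length at least $\lambda(M)\geq 10$. The plan is to reduce the question to pure hyperbolic trigonometry of a single hyperbolic right-angled hexagon, using the standard decomposition of a pair of pants into two such hexagons glued along three seams (the common perpendiculars between pairs of boundary geodesics).

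First I would invoke the classical fact that a hyperbolic pair of pants with boundary lengths $\ell_1,\ell_2,\ell_3$ is obtained by doubling a right-angled hexagon $H$ whose alternate sides have lengths $\ell_1/2,\ell_2/2,\ell_3/2$; the three ``seams'' of $P$ are the remaining three sides of $H$, and the formula for a right-angled hexagon gives each seam length as an explicit decreasing function of the adjacent half-boundary-lengths, e.g. $\cosh(\text{seam}) = (\cosh(\ell_i/2)\cosh(\ell_j/2)+\cosh(\ell_k/2))/(\sinh(\ell_i/2)\sinh(\ell_j/2))$. Since each $\ell_i\geq 10$, each seam length is bounded below by a positive universal constant. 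Now consider the center of the inscribed circle of the hexagon $H$ (or, more simply, the point equidistant from the three seams, which lies on $P$ since under the doubling the seams are the boundary of a fundamental hexagon and the equidistant point is interior). I would take $p$ to be this point and $r_0$ half the minimal distance from $p$ to the union of the three seams; a point at distance $<r_0$ from $p$ then stays in the copy of $H$ and in particular inside $P$. The required bound is then a statement about the inradius of a right-angled hexagon all of whose ``short'' sides are $\geq$ some fixed positive constant, which is again elementary hyperbolic trigonometry and yields a uniform $r_0$.

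An alternative, perhaps cleaner route avoids naming the hexagon center: each of the three boundary geodesics $\gamma$ of $P$ has length $\geq 10$, so by the Collar Lemma it has an embedded collar of half-width $m(\ell)=\arcsin(1/\sinh(\ell/2))$; however $m(\ell)\to 0$ as $\ell\to\infty$, so a collar argument alone does not directly produce a disk of uniform radius, and one must instead locate a point sitting ``in the middle'' of $P$, away from all three boundary components and all three seams simultaneously — which is exactly what the hexagon-center construction provides. So I would combine: (i) the lower bound $\ell_i\geq 10$ on the three boundary lengths; (ii) the hexagon formulas to bound the seam lengths below; (iii) the observation that the point equidistant from the three seams in one hexagon half is interior to $P$ and at distance bounded below from $\partial P$.

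The main obstacle I anticipate is the non-uniformity hinted at above: naive estimates (collars around the boundary geodesics) degenerate as the boundary lengths grow, so one genuinely needs the ``thick part'' of the pants rather than a neighborhood of a boundary curve. Once one commits to the hexagon decomposition and works with the seams — whose lengths \emph{increase}, not decrease, with the boundary lengths, bounded below by a universal constant because each $\ell_i\geq 10$ — the trigonometry is routine and the uniform $r_0$ drops out. I would also need to check the edge behavior as boundary lengths $\to\infty$ (the hexagon becomes large and ``thin'' near the seams, but the inradius stays bounded below), which is a short computation with the hexagon formulas and does not present a real difficulty.
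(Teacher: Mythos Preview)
Your approach via the right-angled hexagon decomposition is reasonable in outline, but there is a factual error in the monotonicity of the seam lengths: the common perpendiculars between boundary geodesics get \emph{shorter}, not longer, as the boundary lengths increase. From the hexagon formula you quote, with $a=\ell_i/2$, $b=\ell_j/2$, $c=\ell_k/2$ all tending to infinity one finds $\cosh(\text{seam})\to 1$, so the seams tend to zero and the hexagon degenerates to an ideal triangle (you seem to sense this in your final paragraph, where the hexagon becomes ``thin near the seams'', contradicting your earlier claim). Hence your step (ii), ``bound the seam lengths below'', fails as written; what the hypothesis $\ell_i\geq 10$ actually gives is an \emph{upper} bound on the seams. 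This does not kill the strategy --- with short seams the hexagon is uniformly close to an ideal triangle, whose incenter sits at distance $\tfrac{1}{2}\log 3$ from each side --- but the argument needs to be reorganised around the correct direction of monotonicity, and your choice of $p$ as the point equidistant from the \emph{seams} (rather than from the boundary sides, which are what form $\partial P$) is the wrong object to look at.

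The paper's proof is essentially a one-liner in a different spirit: it invokes (with a reference to Parlier) the fact that \emph{every} hyperbolic pair of pants --- with no hypothesis on the boundary lengths --- contains an embedded disk of radius $\tfrac{1}{2}\log 3$, the inradius of an ideal triangle, so one may take $r_0=\tfrac{1}{2}\log 3$ explicitly. This is both stronger (it does not use $\lambda(M)\geq 10$) and avoids any trigonometric computation. Your hexagon route, once the monotonicity is fixed, would amount to reproving a special case of this fact under the additional assumption that the boundary curves are long, which suffices for the application but is more work than needed.
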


\begin{proof}
Each pair of pants decomposes into two ideal triangles. As every ideal triangle contains a disk of radius $\frac{1}{2} \log 3$, every pair of pants therefore contains a disk 
of (at least) that radius. Thus we may take $r_0 = \frac{1}{2} \log 3$.
\end{proof}

We first prove the existence of intersecting $\delta_0$-short geodesics on $M$ for sufficiently small $K>1$, as we build forth upon this result 
in the remainder of the proof.

\begin{lem}[Intersections of short geodesics]\label{lem_intersections}
There exists a constant $1 < K_1 \leq K_0$, such that if $M$ is $K$-quasiconformally homogeneous with $1 < K \leq K_1$, then there exist 
$\delta_0$-short geodesics that intersect.
\end{lem}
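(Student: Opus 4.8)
The plan is to argue by contradiction: suppose that for some $K$ arbitrarily close to $1$ the surface $M$ admits \emph{no} two intersecting $\delta_0$-short geodesics. The intuition is that if all $\delta_0$-short geodesics are pairwise disjoint (or at least, no two of them cross), then their union, together with the collars supplied by the Collar Lemma, fills up a definite portion of $M$ in a controlled, ``planar tree-like'' fashion, and one can extract a large embedded pair of pants $P \subset M$ whose three boundary geodesics are short. Since $M$ has genus zero, a maximal collection of disjoint short geodesics cuts $M$ into planar pieces; taking such a piece adjacent to a shortest geodesic gives a pair of pants (or a region containing one) all of whose cuffs are $\delta_0$-short, because by Lemma~\ref{lem_bounds_inj_rad} $\lambda(M)$ is large and by the Collar Lemma the collar of a short geodesic is very wide, forcing any ``next'' geodesic across the collar to also be short — otherwise it would be long and we could instead look at a shorter representative. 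This is the step I expect to be the main obstacle: carefully producing, from the disjointness hypothesis, a pair of pants with \emph{all three} boundary lengths at most $(1+\delta_0)\lambda(M)$, rather than just one or two.

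Granting such a pair of pants $P$, by Lemma~\ref{lem_pants_disk} there is a point $p \in P$ with $D(p, r_0) \subset P$, where $r_0 = \tfrac12\log 3$ is universal. Now pick any point $q$ lying on a shortest simple closed geodesic $\gamma$ of $M$ (so $|\gamma| = \lambda(M)$, or arbitrarily close to it), and a point $q'$ on a \emph{second} short geodesic $\gamma^\ast$ crossing $\gamma$'s collar region, chosen so that $q$ and $q'$ are close, say within hyperbolic distance $r_0/2$ of each other — such a pair exists because inside the wide collar $A(\gamma)$ one finds transversal geodesic arcs. The point is that near $q$ the surface carries \emph{two} short geodesics through a small ball, i.e. a configuration that cannot be realised disjointly. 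More precisely, I would use $K$-quasiconformal homogeneity to move the center $p$ of the embedded disk $D(p,r_0)\subset P$ to the point $q$ by some $f \in \FF_K(M)$; by Lemma~\ref{lem_approx_homeo_disk} (lifting to $\D^2$ and using that $C_1(K,r_0) \to 0$ as $K \to 1$), the image $f(D(p,r_0))$ contains a hyperbolic disk $D(q, r_0 - C_1(K,r_0))$, which for $K$ close enough to $1$ still contains $D(q, r_0/2)$, and hence contains the nearby point $q'$ on $\gamma^\ast$.

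Thus $D(q, r_0/2) \subset f(P)$, and $f(P)$ is a pair of pants (homeomorphic image) whose three boundary curves are images under $f$ of $\delta_0$-short geodesics; by Lemma~\ref{lem_length_geo} their geodesic representatives have length at most $K(1+\delta_0)\lambda(M)$, and by Lemma~\ref{lem_approx_geodesic} they stay within $C_2(K)$ of $\partial f(P)$. For $K$ sufficiently close to $1$ these boundary geodesics are themselves $\delta_0$-short (absorbing the factor $K$ into the slack, since $K(1+\delta_0) < 1 + \delta_0'$ is no good directly — instead one fixes $\delta_0$ generously and chooses $K_1$ so that $K_1(1+\delta_0/2) \le 1+\delta_0$, which is why $\delta_0 = 1/378$ is taken with room to spare). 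Now $\gamma^\ast$, being a short geodesic passing through $D(q,r_0/2)$, sits inside a small neighbourhood of $f(P)$, and likewise $\gamma$; one then shows the short geodesic $\gamma^\ast$ through $q \in \gamma$ must cross one of the boundary geodesics of $f(P)$, producing two intersecting $\delta_0$-short geodesics and contradicting the standing assumption. Setting $K_1 \in (1, K_0]$ small enough that all the ``$C_i(K) \to 0$ as $K\to 1$'' estimates above are in force completes the proof.
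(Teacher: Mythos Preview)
Your contradiction argument has two genuine gaps, and both stem from misidentifying what the pair of pants $P$ is for.

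\textbf{First gap: the construction of $P$.} You aim to build a pair of pants all three of whose cuffs are $\delta_0$-short, and you correctly flag this as the main obstacle --- but the sketch you give (collars are wide, so the ``next'' geodesic must also be short) does not go through. There is no reason the neighbouring curve across a collar is short; it could be arbitrarily long. More importantly, this strong property is \emph{not needed}. The paper takes a different and cleaner route: pick any $\gamma_1 \in \Gamma_0$, use local finiteness of $\Gamma_0$ (which the paper proves) to find a \emph{closest} $\delta_0$-short geodesic $\gamma_2$, and let $\gamma'$ be the geodesic homotopic to $\gamma_1 \cup \eta \cup \gamma_2$. The pair of pants $P$ bounded by $\gamma_1, \gamma_2, \gamma'$ has only two short cuffs; $\gamma'$ may be long. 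What matters is not the cuff lengths but that $\Int(P)$ is disjoint from every $\delta_0$-short geodesic --- and this follows from the minimality of $d(\gamma_1,\gamma_2)$: any $\gamma_3 \in \Gamma_0$ entering $\Int(P)$ would have to cross the seam $\eta$ and hence lie closer to $\gamma_1$ than $\gamma_2$ does.

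\textbf{Second gap: the contradiction mechanism is circular.} You want a point $q \in \gamma$ and a nearby point $q'$ on a second short geodesic $\gamma^\ast$, justified by ``inside the wide collar $A(\gamma)$ one finds transversal geodesic arcs''. But transversal arcs are not closed geodesics, and under the standing assumption that all $\delta_0$-short geodesics are pairwise disjoint, any short $\gamma^\ast$ entering the collar of $\gamma$ and not meeting $\gamma$ would be homotopic to $\gamma$, hence equal to it. So no such $\gamma^\ast$ exists, and your sentence ``near $q$ the surface carries two short geodesics through a small ball'' is precisely the conclusion of the lemma, not something you may assume. The paper's contradiction runs in the opposite direction and avoids this entirely: rather than moving $P$ onto a short geodesic, one takes $f \in \FF_K(M)$ with $f(p_0) = p$ (where $p_0$ lies on a fixed $(\delta_0/2)$-short geodesic $\gamma_0$ and $p$ is the centre of $D(p,r_0) \subset P$); then the geodesic $\gamma''$ homotopic to $f(\gamma_0)$ is $\delta_0$-short by the choice $K_1 \le (1+\delta_0)/(1+\delta_0/2)$, and by Lemma~\ref{lem_approx_geodesic} with $C_2(K_1) \le r_0/2$ it meets $D(p,r_0) \subset \Int(P)$ --- contradicting that $\Int(P)$ contains no short geodesic.
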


\begin{proof}
To prove there exist intersecting $\delta_0$-short geodesics on $M$, we argue by contradiction. That is, suppose all $\delta_0$-short geodesics on $M$ are mutually disjoint. Choose $1 < K_1 \leq K_0$ so that 
\begin{equation}\label{eq_choice_K}
K_1 \leq \frac{1 + \delta_0}{1+ \delta_0 / 2} ~ \textup{and} ~ C_2(K_1) \leq \frac{r_0}{2},
\end{equation}
with $C_2(K)$ the constant of Lemma~\ref{lem_approx_geodesic} and $r_0$ the constant of Lemma~\ref{lem_pants_disk}. 

Let us first observe that there exist infinitely many distinct $\delta_0$-short geodesics on $M$. Indeed, as $\lambda(M)>0$, there exists a simple closed geodesic $\gamma_0 \subset M$ such that $|\gamma_0| \leq (1+ \delta_0/2) \lambda(M)$. Mark a point $p_0 \in \gamma_0$ and choose $f \in \FF_K(M)$ such that $f(p_0)=q$, for a certain $q \in M$. By our choice of $K_1$, cf.~\eqref{eq_choice_K}, combined with Lemma~\ref{lem_length_geo}, the geodesic $\gamma$ homotopic to $f(\gamma_0)$ is $\delta_0$-short, for every $f \in \FF_K(M)$. As the surface is unbounded (in the hyperbolic metric), by transporting the geodesic $\gamma_0$ by different elements of $\FF_K(M)$ sufficiently far apart, by Lemma~\ref{lem_approx_geodesic}, we see that there must indeed exist infinitely many different $\delta_0$-short curves. Denote $\Gamma_0$ the (countable) family of all $\delta_0$-short geodesics on $M$. 

As all elements of $\Gamma_0$ lie in different homotopy classes, and all elements are mutually disjoint, we claim that the elements of $\Gamma_0$ 
are locally finite, in the sense that a compact subset of $M$ only intersects finitely many distinct elements of $\Gamma_0$. Indeed, 
suppose that a compact subset of $M$ intersects infinitely many elements of $\Gamma_0$. Label these geodesics $\gamma_n$, $n \in \Z$. 
By compactness, there exists an element $\gamma :=\gamma_n$, for some $n \in \Z$, and a subsequence $\gamma_{n_k}$, with $n_k \neq n$, such that $d(\gamma, \gamma_{n_k}) \ra 0$ for $k \ra \infty$. As all these elements are mutually disjoint, we can find points $p_k \in \gamma_{n_k}$ such that $p_k \ra p \in \gamma$, where, moreover, the vectors $v_k \in T_{p_k}M$ tangent to $\gamma_{n_k}$ at $p_k$ converge to the tangent vector $v \in T_{p}M$ of $\gamma$ at $p$. As the lengths of the geodesics $\gamma_{n_k}$ are uniformly bounded from above, by the Collar Lemma (see section~\ref{sec_quasi_geo_est}), every curve $\gamma_{n_k}$ is contained in a uniformly thick embedded annulus $A_k := A(\gamma_{n_k}) \subset M$. 
Conversely, as the lengths of the geodesics are uniformly bounded from above, and as the initial data $(p_k, v_k)$ of $\gamma_{n_k}$ converges to 
the initial data $(p,v)$ of $\gamma$, the geodesics $\gamma_{n_k}$ converge uniformly to $\gamma$. In particular, for sufficiently large $k$, $\gamma$ is entirely contained in $A_k$. However, this implies that $\gamma$ is homotopic to $\gamma_{n_k}$, a contradiction as these were all assumed to be mutually disjoint and thus non-homotopic.

Choose an element $\gamma_1 \in \Gamma_0$. As the elements of $\Gamma_0$ are locally finite, and as the distance between any two elements of 
$\Gamma_0$ is finite, the distance between $\gamma_1$ and the union of the elements $\Gamma_0 \setminus \{ \gamma_1 \}$ is therefore bounded from below and above. In particular, there exists a $\delta_0$-short geodesic with shortest distance to $\gamma_1$ (though this geodesic need not be unique). Denote one such geodesic by $\gamma_2$. There exists a geodesic arc $\eta \subset M$ connecting $\gamma_1$ and $\gamma_2$, with $|\eta| = d(\gamma_1, \gamma_2)$. Take a simple closed curve $\alpha \subset M$ homotopic to $\gamma_1 \cup \eta \cup \gamma_2$ and let $\gamma'$ be the (not necessarily $\delta_0$-short) geodesic homotopic to $\alpha$. As $\gamma_1$ and $\gamma_2$ are disjoint, and therefore in distinct homotopy classes, $\gamma'$ is non-trivial. By Lemma~\ref{lem_curves_properties} (iii), we have that 
\[ \gamma' \cap ( \gamma_1 \cup \gamma_2) = \emptyset. \]
Let $P \subset M$ be the pair of pants bounded by the simple closed geodesics $\gamma', \gamma_1$ and $\gamma_2$. 
As every pair of pants contains a unique simple geodesic arc connecting each pair of boundary geodesics of the pair of pants, $\eta \subset P$ is the unique geodesic arc in $P$ joining $\gamma_1$ and $\gamma_2$ such that $|\eta| = d(\gamma_1, \gamma_2)$, see Figure~\ref{fig_pants_homotopic}.

\begin{figure}[h]
\begin{center}
\psfrag{gamma_1}{$\gamma_1$}
\psfrag{gamma_2}{$\gamma_2$}
\psfrag{eta}{$\eta$}
\psfrag{gamma_3}{$\gamma'$}
\psfrag{alpha}{$\alpha$}
\includegraphics[scale=0.6]{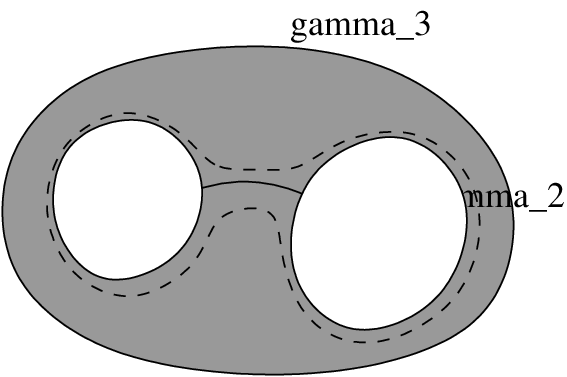}
\caption{Proof of Lemma~\ref{lem_intersections}}
\label{fig_pants_homotopic}
\end{center}
\end{figure}

Next, we claim that the interior of $P$ is disjoint from any $\delta_0$-short geodesic. Indeed, let 
\[ \gamma_3 \in \Gamma_0 \setminus \{ \gamma_1,\gamma_2\}, \]
and suppose that $\gamma_3 \cap \Int(P) \neq \emptyset$. As $\gamma_1, \gamma_2 \in \Gamma_0$, by assumption $\gamma_3$ can not intersect $\gamma_1$ or $\gamma_2$. Thus, if $\gamma_3 \cap \Int(P) \neq \emptyset$, then we necessarily have that $\gamma_3 \cap \gamma' \neq \emptyset$. 
Consider any two consecutive intersection points $q_1, q_2 \in \gamma_3$ of $\gamma'$ with $\gamma_3$ and denote $\eta' \subset \gamma_3  \cap P$ the corresponding 
simple arc with endpoints $q_1$ and $q_2$. We first show that we must have that $\eta' \cap \eta \neq \emptyset$. To show this, suppose that 
$\eta' \cap \eta = \emptyset$. Then $\eta' \cap (\gamma_1 \cup \eta \cup \gamma_2) = \emptyset$. Define $\gamma' _{1,2} \subset \gamma' $ to be the simple arcs which 
are the two connected components of $\gamma'  \setminus \{ q_1, q_2\}$. Consider the subsurfaces $M_1, M_2 \subset M$ bounded by $\gamma'_1 \cup \eta'$ 
and $\gamma'_2 \cup \eta'$ respectively and intersecting $\Int(P)$. As $\eta' \cap (\gamma_1 \cup \eta \cup \gamma_2) = \emptyset$, $\gamma_1 \cup \eta \cup \gamma_2$ is contained in either $M_1$ or $M_2$. However, this implies that one of these subsurfaces is a topological disk, which contradicts Lemma~\ref{lem_curves_properties} (iii). Therefore, we must have that $\eta' \cap \eta \neq \emptyset$. This in turn implies that 
\begin{equation}
d(\gamma_3, \gamma_1) < d(\gamma_1, \gamma_2 )
\end{equation}
which contradicts the assumption that $\gamma_2$ is the closest $\delta_0$-short geodesic to $\gamma_1$. Thus the interior of $P$ is disjoint 
from any $\delta_0$-short geodesic.

By Lemma~\ref{lem_pants_disk}, there exists a point $p \in P$ such that $D(p, r_0) \subset P$. By the previous paragraph, the disk $D(p,r_0)$ is disjoint from any $\delta_0$-short geodesic. Take $f \in \FF_K(M)$ such that $f(p_0)= p$. The geodesic $\gamma''$ homotopic to $f(\gamma_0)$ is 
$\delta_0$-short and, again by our choice of $K_1$, combined with Lemma~\ref{lem_approx_geodesic}, the geodesic $\gamma''$ has the property that 
$\gamma'' \cap D(p, r_0) \neq \emptyset$. This contradicts our earlier conclusion that the interior of $P$ is disjoint from $\delta_0$-short geodesics and thus there must exist $\delta_0$-short geodesics that intersect.
\end{proof}

\begin{lem}[Two-circle Lemma]\label{lem_two_circles}
Let $M$ be $K$-quasiconformally homogeneous with $1 < K \leq K_1$ and let $\gamma_1$ and $\gamma_2$ be two intersecting $\delta$-short geodesics,
where $\delta < 1/3$. Then $\gamma_1 \cup \gamma_2$ is a two-circle configuration and the four arcs $\eta_i$, $1 \leq i \leq 4$, connecting the intersection points $p_1$ and $p_2$ have lengths
\begin{equation}\label{eq_two_circle}
\frac{\lambda(M)}{2} - \frac{\delta}{2} \lambda(M) \leq | \eta_i | \leq \frac{\lambda(M)}{2} + \frac{3\delta}{2} \lambda(M).
\end{equation}
\end{lem}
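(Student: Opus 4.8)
The plan is to establish the two assertions in turn: first that $\gamma_1$ and $\gamma_2$ meet in exactly two points, and then the length bounds for the four connecting arcs.

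First I would record the structural facts. Since $\gamma_1,\gamma_2$ are distinct simple closed geodesics (distinctness is implicit in ``intersecting''), they are non-homotopic, because homotopic closed geodesics on a hyperbolic surface coincide; and, being geodesics, they meet transversally in finitely many points, realising their geometric intersection number. By Lemma~\ref{lem_curves_properties}(ii) this number is even, and it is positive, so write it as $2n$ with $n \geq 1$. I will also use repeatedly that every homotopically non-trivial closed curve on $M$ has length at least $\lambda(M)$: its geodesic representative exists ($M$ has no cusps, by the Remark following Lemma~\ref{lem_bounds_inj_rad}), is no longer than the curve, and has length $\geq\lambda(M)$ since the shortest closed geodesic on a hyperbolic surface is simple.

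To see $n=1$, suppose $n \geq 2$. The $2n$ intersection points cut $\gamma_1$ into consecutive subarcs $a_1,\dots,a_{2n}$ with $\sum_{i=1}^{2n}|a_i| = |\gamma_1| \leq (1+\delta)\lambda(M)$. The two endpoints of each $a_i$ also lie on $\gamma_2$; let $b_i$ be the shorter of the two subarcs of $\gamma_2$ joining them, so $|b_i|\leq |\gamma_2|/2 \leq (1+\delta)\lambda(M)/2$. By Lemma~\ref{lem_curves_properties}(iii) the closed curve $a_i \cup b_i$ is non-trivial, hence $|a_i|+|b_i| \geq \lambda(M)$. Summing over $i$ and inserting the two upper bounds gives $2n\,\lambda(M) \leq (1+\delta)\lambda(M) + n(1+\delta)\lambda(M)$, i.e. $n(1-\delta) \leq 1+\delta$; since $\delta < 1/3$ this forces $n < 2$, a contradiction. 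Hence $\#(\gamma_1\cap\gamma_2)=2$; as $M$ has genus zero, the complement of $\gamma_1\cup\gamma_2$ then has the asserted topological form (four regions, each bounded by two arcs), so $\gamma_1\cup\gamma_2$ is a two-circle configuration. Label the intersection points $p_1,p_2$ and the arcs $\eta_1,\eta_2 \subset \gamma_1$, $\eta_3,\eta_4 \subset \gamma_2$ as in Figure~\ref{fig_two_circle}.

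For the length estimates, the key input is again Lemma~\ref{lem_curves_properties}(iii): each of the four closed curves $\eta_i \cup \eta_j$ with $i \in \{1,2\}$, $j\in\{3,4\}$ is non-trivial, hence has length at least $\lambda(M)$. For the lower bound on, say, $|\eta_1|$, I add $|\eta_1|+|\eta_3| \geq \lambda(M)$ and $|\eta_1|+|\eta_4|\geq\lambda(M)$ and use $|\eta_3|+|\eta_4| = |\gamma_2| \leq (1+\delta)\lambda(M)$ to get $2|\eta_1| \geq (1-\delta)\lambda(M)$, i.e. $|\eta_1| \geq \lambda(M)/2 - (\delta/2)\lambda(M)$; the identical argument (using $|\gamma_1|$ for $\eta_3,\eta_4$) applies to each of the four arcs. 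The upper bound then follows from the lower bound applied to the complementary arc on the same geodesic: $|\eta_1| = |\gamma_1| - |\eta_2| \leq (1+\delta)\lambda(M) - \big(\lambda(M)/2 - (\delta/2)\lambda(M)\big) = \lambda(M)/2 + (3\delta/2)\lambda(M)$, and symmetrically for $\eta_2,\eta_3,\eta_4$. This is precisely~\eqref{eq_two_circle}.

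The only points invoked without proof are the two classical facts that two distinct closed geodesics meet in exactly $i(\gamma_1,\gamma_2)$ points and that a shortest closed geodesic is simple; everything else is bookkeeping. The substantive step is the counting estimate forcing $n=1$, and it is there that the hypothesis $\delta < 1/3$ enters in an essentially sharp way, since the argument requires $(1+\delta)/(1-\delta) < 2$.
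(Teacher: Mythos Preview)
Your proof is correct and follows the paper's approach: both parts rest on Lemma~\ref{lem_curves_properties}(iii) to produce non-trivial closed curves from pairs of subarcs and then compare lengths against $\lambda(M)$. The only cosmetic difference is in ruling out $\geq 4$ intersection points: the paper uses pigeonhole to locate a single arc of $\gamma_1$ of length at most $(1+\delta)\lambda(M)/4$ and concatenates it with half of $\gamma_2$, whereas you sum the inequalities $|a_i|+|b_i|\geq\lambda(M)$ over all $2n$ arcs; both routes hit the same threshold $\delta<1/3$, and your derivation of the arc-length bounds is if anything slightly more symmetric than the paper's.
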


\begin{proof}
Let $\gamma_1, \gamma_2 \subset M$ be two $\delta$-short geodesics that intersect. By Lemma~\ref{lem_curves_properties} (ii), $\gamma_1$ and 
$\gamma_2$ intersect in an even number of points. To prove there can be no more than two intersection points, suppose that there are $2k$
intersection points, with $k \geq 2$. Label these points $p_1,...,p_{2k}$ according to their cyclic ordering on $\gamma_1$, relative to an orientation on $\gamma_1$ and an initial point. Define the arcs $\alpha_i$, with $1 \leq i \leq 2k$, to be the connected components of 
\[ \gamma_1 \setminus \bigcup_{i=1}^{2k} p_i. \] 
As $k \geq 2$ by assumption, at least one of these arcs has length at most $(1+ \delta)\lambda(M)/4$. Without loss of generality, we may suppose that this is the case for $\alpha_1$. Then the endpoints of $\alpha_1$, $p_1$ and $p_2$, cut the geodesic $\gamma_2$ into two connected components $\beta_1$ and $\beta_2$. One of these components, say $\beta_1$, has length at most $(1+ \delta)\lambda(M)/2$. 
By Lemma~\ref{lem_curves_properties} (iii), $\alpha_1 \cup \beta_1$ is a non-trivial closed curve. However, we have that
\[ |\alpha_1 \cup \beta_1| \leq \frac{3(1+ \delta)\lambda(M)}{4} < \lambda(M), \]
as $(1+\delta) < 4/3$, which is impossible. Thus $\gamma_1$ and $\gamma_2$ intersect in exactly two points.

To prove~\eqref{eq_two_circle}, we adopt the labeling in Figure~\ref{fig_two_circle}. As $\gamma_1 = \eta_1 \cup \eta_2$ and 
$| \gamma_1| \leq (1+ \delta)\lambda(M)$, one of the arcs $\eta_1$ or $\eta_2$ has length at most $(1+ \delta)\lambda(M)/2$. We may assume this is the case for $\eta_1$. As the closed curve $\eta_3 \cup \eta_1$ is homotopically non-trivial, we must have that
\[ \frac{(1+\delta)\lambda(M)}{2} + | \eta_3 | \geq |\eta_1| + |\eta_3| \geq \lambda(M), \]
and thus
\begin{equation}\label{eq_two_est_1}
|\eta_3| \geq \frac{(1 - \delta)\lambda(M)}{2}.
\end{equation}
Conversely, in order that $| \eta_3 | + |\eta_4| \leq (1+\delta)\lambda(M)$, by~\eqref{eq_two_est_1}, we must
have that 
\begin{equation}
| \eta_4 | \leq \left( \frac{1}{2} + \frac{3}{2} \delta \right) \lambda(M).
\end{equation}
The other cases follow by symmetry. This finishes the proof.
\end{proof}

In particular, the two-circle Lemma holds for all $\delta \leq 6 \delta_0 < 4/3$. For future reference, we introduce the following. 

\begin{defn}[Tight pair of pants]\index{pair of pants!tight pair of pants}
A {\em tight pair of pants} is a pair of pants $P \subset M$ such that the three boundary curves are $3 \delta_0$-short geodesics.
\end{defn}

We have the following corollary of the two-circle Lemma.

\begin{cor}\label{cor_exist_pants}
Let $M$ be $K$-quasiconformally homogeneous with $1 < K \leq K_1$. Then there exists a tight pair of pants.
\end{cor}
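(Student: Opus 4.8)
The plan is to extract a two-circle configuration, take a regular neighbourhood of it, and cut that neighbourhood into two pairs of pants. By Lemma~\ref{lem_intersections}, for $1 < K \leq K_1$ there exist $\delta_0$-short geodesics $\gamma_1, \gamma_2 \subset M$ that intersect; since $\delta_0 = 1/378 < 1/3$, the Two-circle Lemma (Lemma~\ref{lem_two_circles}) applies, so $\gamma_1 \cup \gamma_2$ is a two-circle configuration: $\gamma_1 \cap \gamma_2 = \{p_1, p_2\}$, and the four arcs $\eta_1, \eta_2 \subset \gamma_1$ and $\eta_3, \eta_4 \subset \gamma_2$ joining $p_1$ and $p_2$ satisfy $|\eta_i| \leq \frac{(1+3\delta_0)\lambda(M)}{2}$.

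Next I would pass to a regular neighbourhood $N \subset M$ of $\gamma_1 \cup \gamma_2$. Regarding $\gamma_1 \cup \gamma_2$ as a connected graph with two vertices and four edges, $N$ has Euler characteristic $-2$, and being an orientable subsurface of the genus zero surface $M$ it is a four-holed sphere. Its four boundary curves $c_1, \dots, c_4$ are freely homotopic in $M$ to the four curves obtained by concatenating one of $\eta_1, \eta_2$ with one of $\eta_3, \eta_4$, namely the boundary curves of the four components of $M \setminus (\gamma_1 \cup \gamma_2)$. By Lemma~\ref{lem_curves_properties}(iii) each $c_k$ is homotopically non-trivial, and from the arc estimate its geodesic representative $c_k^{*}$ has length at most $|\eta_i| + |\eta_j| \leq (1 + 3\delta_0)\lambda(M)$, i.e. $c_k^{*}$ is $3\delta_0$-short; also $\gamma_1$, being $\delta_0$-short, is $3\delta_0$-short. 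The curve $\gamma_1$ is essential and non-peripheral in $N$ — in $\pi_1(N)$ it is conjugate to a product of two of the $c_k$, which is not conjugate into any single boundary subgroup of the free group $\pi_1(N)$ — so it separates $N$ into two pairs of pants, one of which, call it $P_0$, has boundary $\gamma_1 \cup c_1 \cup c_2$ after a suitable labelling.

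Finally I would realize $P_0$ by geodesics. The essential point is to check that $\gamma_1, c_1, c_2$ are pairwise non-homotopic in $M$: each is essential by the above, and if two of them were homotopic they would be disjoint, homotopic essential simple closed curves, hence would cobound an embedded annulus $A$; glueing $A$ to $P_0$ along those two curves would produce an embedded one-holed torus in $M$ with essential boundary, which is impossible since $M$ has genus zero (Lemma~\ref{lem_curves_properties}(i)). Being disjoint, essential and pairwise non-homotopic, $\gamma_1, c_1, c_2$ are isotopic to their geodesic representatives $\gamma_1, c_1^{*}, c_2^{*}$, which are then pairwise disjoint and bound a pair of pants $P$ isotopic to $P_0$; all three of its boundary geodesics are $3\delta_0$-short, so $P$ is a tight pair of pants. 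I expect this last non-degeneracy step — excluding that two of the three boundary geodesics coincide, which is exactly where the genus zero hypothesis is used — to be the main obstacle; identifying the homotopy classes of $\partial N$ and passing to geodesic representatives are routine.
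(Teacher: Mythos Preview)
Your construction is essentially the paper's: the paper also starts from the two-circle configuration of Lemma~\ref{lem_two_circles}, takes the piecewise-geodesic curves $\alpha_1=\eta_1\cup\eta_3$ and $\alpha_2=\eta_2\cup\eta_3$ (exactly the curves you call $c_1,c_2$ after passing to a regular neighbourhood and cutting along $\gamma_1$), straightens them to geodesics $\gamma_1',\gamma_2'$, and reads off from~\eqref{eq_two_circle} that all three boundary curves are $3\delta_0$-short. The paper is in fact terser than you at the last step, simply asserting that $\gamma_1,\gamma_1',\gamma_2'$ are disjoint and bound a pair of pants.

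Your one-holed-torus argument for pairwise non-homotopy, however, does not work as written. Suppose $c_1\sim c_2$. Since $M$ is planar, $M\setminus(c_1\cup c_2)$ has exactly three components, and the unique embedded annulus $A$ with $\partial A=c_1\cup c_2$ is the component bounded by \emph{both} curves. But $\mathrm{int}(P_0)$, being connected and touching both $c_1$ and $c_2$, lies in that same component; hence $P_0\subset A$, the interiors are not disjoint, and $P_0\cup A$ is not an embedded one-holed torus. The same happens for $\gamma_1\sim c_i$: the annulus between them again contains $P_0$.

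The repair is short and stays within your setup. With $X_i$ the component of $M\setminus N$ bounded by $c_i$, the middle region above is $N\cup X_3\cup X_4$; if this were an annulus then $0=\chi(N)+\chi(X_3)+\chi(X_4)=-2+\chi(X_3)+\chi(X_4)$, forcing $X_3,X_4$ to be disks and $c_3,c_4$ to be null-homotopic, contradicting Lemma~\ref{lem_curves_properties}(iii). Likewise, if $\gamma_1\sim c_1$ the middle region is $P_0\cup X_2$, whose vanishing Euler characteristic would force $c_2$ to be inessential. Equivalently, $\gamma_1\sim \eta_1\cup\eta_3$ would make $\eta_2\cup\eta_3$ null-homotopic, again contradicting Lemma~\ref{lem_curves_properties}(iii).
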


\begin{figure}[h]
\begin{center}
\psfrag{gamma_1}{$\gamma_1$}
\psfrag{gamma_2}{$\gamma_2$}
\psfrag{alpha_1}{$\alpha_1$}
\psfrag{alpha_2}{$\alpha_2$}
\psfrag{gamma'_1}{$\gamma_1'$}
\psfrag{gamma'_2}{$\gamma_2'$}
\psfrag{P}{$P$}
\psfrag{p_1}{$p_1$}
\psfrag{p_2}{$p_2$}
\includegraphics[scale=0.7]{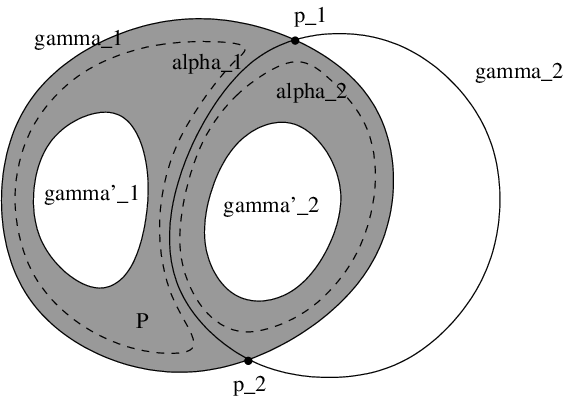}
\caption{Proof of Corollary~\ref{cor_exist_pants}.}
\label{fig_pair_pants}
\end{center}
\end{figure}

\begin{proof}
By the two-circle Lemma, for $K \leq K_1$, there exists $\delta_0$-short geodesics $\gamma_1$ and $\gamma_2$ 
that intersect in exactly two points. In the labeling of Figure~\ref{fig_two_circle}, let $\alpha_1$ be the simple closed curve $\eta_1 \cup \eta_3$
and $\gamma'_1$ be the simple closed geodesic homotopic to $\alpha_1$. Similarly, let $\alpha_2$ the simple closed curve $\eta_2 \cup \eta_3$ and $\gamma_2'$ be the simple closed geodesic homotopic to $\alpha_2$. By Lemma~\ref{lem_curves_properties} (iii), the three geodesics $\gamma'_1, 
\gamma'_2$ and $\gamma_1$ are disjoint and thus the region bounded by these three simple closed geodesics is a pair of pants $P$, 
see Figure~\ref{fig_pair_pants}. 

To prove $P$ is a tight pair of pants, it suffices to show that $\gamma'_i$ is $3\delta_0$-short, for $i=1,2$, as $\gamma_1$ is $\delta_0$-short. 
It follows from~\eqref{eq_two_circle} of the two-circle Lemma that 
\begin{equation}
| \eta_1 | + |\eta_3| \leq 2 \left( \frac{\lambda(M)}{2} + \frac{3\delta_0}{2}\lambda(M) \right) = (1+ 3\delta_0)\lambda(M).
\end{equation}
Therefore, the length of $\gamma'_1$ is bounded by the length of $\eta_1 \cup \eta_3$, which is at most $(1+ 3\delta_0)\lambda(M)$. 
Similarly, considering the length of $\eta_2 \cup \eta_4$, we obtain that $\gamma_2'$ is $3 \delta_0$-short. Therefore, $P$ is a tight pair 
of pants.
\end{proof}

\subsection{Definite angles of intersection}\label{subsec_quasi_def_angle}

In what follows, we use the following notation. Let $\gamma_1, \gamma_2 \subset M$ be two simple closed geodesics that intersect at a point $p \in M$. The angle between the two geodesics at $p \in M$, denoted $\angle ( \gamma_1, \gamma_2 )_p$, is defined to be the minimum of $\angle ( v_1, v_2 )_p$ and $\angle (v_1, -v_2)_p$ where $v_1,v_2 \in T_pM$ is a tangent vector to $\gamma_1, \gamma_2$ respectively. In order to produce certain configurations of intersecting simple closed geodesics, we show that for all $K > 1$ sufficiently small, there exist $3 \delta_0$-short geodesics intersecting at a uniformly large angle. More precisely, 

\begin{lem}[Definite angles of intersection]\label{lem_large_angle}
There exists a constant $1 < K_2 \leq K_1$, such that if $M$ is $K$-quasiconformally homogeneous with $1 < K \leq K_2$, 
then there exist two $3 \delta_0$-short geodesics $\gamma_1, \gamma_2 \subset M$, intersecting at a point $q \in M$, such that 
$\angle ( \gamma_1, \gamma_2 )_q \geq \pi/4$.
\end{lem}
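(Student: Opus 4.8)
The plan is to produce two $3\delta_0$-short geodesics meeting at a definite angle by a compactness/contradiction argument built on the tight pair of pants from Corollary~\ref{cor_exist_pants} together with the homogeneity of $M$. Suppose, towards a contradiction, that for every $K>1$ (below $K_1$) every pair of intersecting $3\delta_0$-short geodesics on the corresponding $M=M_K$ meets at angle $<\pi/4$ at every intersection point. Fix the tight pair of pants $P\subset M$ given by Corollary~\ref{cor_exist_pants}, with boundary geodesics $\gamma_1,\gamma_1',\gamma_2'$ obtained from an intersecting pair $\gamma_1,\gamma_2$ via the two-circle Lemma. By Lemma~\ref{lem_pants_disk} there is a point $p\in P$ with $D(p,r_0)\subset P$. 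The idea is to \emph{transport} $\gamma_1$ (with a marked point $p_0\in\gamma_1$) by an element $f\in\FF_K(M)$ with $f(p_0)$ equal to a point near $p$, producing a new $\delta_0$-short (hence $3\delta_0$-short) geodesic $\gamma$ which, by Lemma~\ref{lem_approx_geodesic}, passes within $C_2(K)$ of $p$ and hence (for $K\le K_2$ with $C_2(K)$ small) crosses into $D(p,r_0)\subset P$. Since $\gamma$ is $3\delta_0$-short and is forced by the geometry of the pants $P$ to exit through one of the boundary geodesics $\gamma_1,\gamma_1',\gamma_2'$ of $P$ (a $3\delta_0$-short curve is too short to wrap around a boundary cuff inside the collar), $\gamma$ must intersect one of these three $3\delta_0$-short boundary geodesics.

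At that intersection point $q$, the standing assumption forces $\angle(\gamma,\gamma_i)_q<\pi/4$, and similarly at \emph{every} intersection of any two $3\delta_0$-short geodesics. The second step is to derive a contradiction from this angle constraint by a length/area estimate. Concretely, I would track a geodesic arc of $\gamma$ crossing $P$: it enters through some boundary geodesic and (being short) must re-exit; between entry and exit it travels through $P$. If all crossing angles with the boundary geodesics are smaller than $\pi/4$, then near each boundary cuff the arc of $\gamma$ stays close to (nearly tangent to) the boundary geodesic for a definite hyperbolic length — one can make this precise using the Collar Lemma, since in the standard collar coordinates around a geodesic of length $\ell\le(1+3\delta_0)\lambda(M)$, a geodesic arc crossing at angle $<\pi/4$ has to spend length bounded below in terms of the collar width $m(\ell)$ before it can leave the collar. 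Summing the contributions of the two (or more) cuffs the arc meets, one gets a lower bound on $|\gamma|$ that grows with $\lambda(M)$; but $|\gamma|\le(1+3\delta_0)\lambda(M)$, and since $\lambda(M)\ge 10$ for $K\le K_0$ the constants can be arranged (this is exactly the role of the numerically explicit choice $\delta_0=1/378$) to make this impossible. Hence some intersection angle must be $\ge\pi/4$, which is the desired conclusion.

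Alternatively, and perhaps more robustly, one can run a genuine limiting argument: if no such pair of geodesics exists for any $K>1$, take a sequence $K_n\to 1$, surfaces $M_n$, and the associated tight pairs of pants $P_n$ with the transported short geodesics $\gamma^{(n)}$. Normalize so that $\lambda(M_n)$ is fixed after rescaling, or equivalently work in $\D^2$ via the universal cover and use Lemma~\ref{lem_approx_homeo_disk}: as $K_n\to1$ the relevant quasiconformal maps are uniformly close to Möbius transformations on $D(p,\rho)$, so the configuration of $\gamma^{(n)}$ and the boundary geodesics of $P_n$ converges to a configuration of genuine hyperbolic geodesics in $\D^2$ meeting at well-defined angles. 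In the limit the short geodesic must cross a boundary geodesic of the limiting pants transversally, and for an \emph{honest} hyperbolic pair of pants with cuffs of length $\le(1+3\delta_0)\lambda$ and a geodesic of length $\le(1+3\delta_0)\lambda$ crossing it, elementary hyperbolic trigonometry (again via the collar estimate) gives that at least one crossing angle is bounded below by a definite constant $\ge\pi/4$ — contradicting the assumed vanishing of angles. This contradicts the hypothesis for $n$ large, so $K_2$ can be chosen as any $K_n$ close enough to $1$.

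The main obstacle I anticipate is the quantitative angle bound: showing that a short geodesic crossing a tight pair of pants must meet \emph{some} boundary cuff at angle at least $\pi/4$ (rather than a smaller definite constant), and in particular verifying that the chosen value $\delta_0=1/378$ is small enough to make the length-versus-collar-width bookkeeping close. This is where the Collar Lemma has to be used carefully: I would parametrize the collar $A(\gamma_i)$ by $(s,t)$ with $t\in(-m(\ell),m(\ell))$ the signed distance to $\gamma_i$, write down the geodesic equation for an arc entering the collar at angle $\theta$, and estimate from below the $s$-displacement (hence the length) needed to reach $|t|=m(\ell)$; the bound degrades as $\theta\to0$, which is precisely why a uniform lower bound on $\theta$ at some cuff is forced once the total length is capped. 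A secondary technical point is ensuring $\gamma$ actually enters $\Int(P)$ and is not absorbed into a collar of a boundary cuff before reaching it — this is handled exactly as in the proof of Lemma~\ref{lem_intersections}, using that $D(p,r_0)\subset\Int(P)$ lies at definite distance from $\partial P$ and that $C_2(K_2)$ can be taken much smaller than $r_0$.
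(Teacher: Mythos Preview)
Your setup matches the paper's: take the tight pair of pants $P$ from Corollary~\ref{cor_exist_pants}, transport a $\delta_0$-short geodesic by an $f\in\FF_K(M)$ so that its geodesic representative $\gamma$ passes close to a fixed interior point of $P$, and observe that $\gamma$ must cross some boundary cuff. The gap is in your mechanism for forcing a definite crossing angle. The collar argument does not work in this regime: for a cuff of length $\ell$, the collar width is $m(\ell)=\arcsin\bigl(1/\sinh(\ell/2)\bigr)$, which decays like $2e^{-\ell/2}$ as $\ell\to\infty$. Since $\lambda(M)\to\infty$ as $K\to1$, the collars around the three cuffs of $P$ become exponentially thin, and the length a geodesic arc spends crossing such a collar at angle $\theta$ is of order $m(\ell)/\sin\theta$, which tends to $0$ for any fixed $\theta>0$. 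Thus the ``summing the collar contributions'' step gives a lower bound on $|\gamma|$ that tends to $0$, not one that grows with $\lambda(M)$; it cannot contradict $|\gamma|\le(1+3\delta_0)\lambda(M)$. Your alternative limiting argument defers to the same collar estimate at the crucial moment, so it has the same hole.

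What the paper does instead is use the degeneration of $P$ itself. Lifting $P$ to $\D^2$ yields a right-angled hexagon $H$ with alternate sides of length $\tfrac12|\gamma_i|\asymp\tfrac12\lambda(M)$; the opposite (seam) sides have lengths that tend to $0$ as $\lambda(M)\to\infty$ (this is Lemma~\ref{lem_hexagon}, a direct computation with the hexagon cosine law). Hence, after normalizing, $H$ converges on compacta to an ideal triangle $T$ with barycenter $0$. The key geometric fact is then purely about $T$: any complete geodesic through $0$ hits one of the three sides of $T$ at angle at least $\pi/4+\epsilon_0$ with $\epsilon_0\approx0.24$ (Lemma~\ref{lem_ideal_triangle}); this is a pigeonhole/symmetry computation, not a collar estimate. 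Sending the marked point of the transported geodesic to the point of $P$ corresponding to $0$, and choosing $K_2$ small enough that both the hexagon is close to $T$ on $D(0,10)$ and $C_2(K_2)$ is small, one concludes that $\gamma$ meets some cuff at angle $\ge\pi/4$. In short, your outline is right up to the last step, but the angle bound comes from the ideal-triangle limit of the hexagon, not from collar widths.
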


The proof of Lemma~\ref{lem_large_angle} uses the following two auxiliary lemma's.

\begin{lem}\label{lem_hexagon}
Let $H \subset \D^2$ be a right-angled hyperbolic hexagon. Let $a,b,c$ be the sides of the alternate edges and $a', b', c'$ the sides of the opposite edges. Suppose that $|a| = (1+ \epsilon_1) \ell, |b| = (1+ \epsilon_2) \ell$ and $|c| = (1+\epsilon_3) \ell$ with $0 \leq \epsilon_i < 1/2$, 
$1 \leq i \leq 3$. For every $\epsilon >0$, there exists $\ell_{\epsilon} >0$ such that, if $\ell \geq \ell_{\epsilon}$, then the lengths of the sides $a',b'$ and $c'$ are at most $\epsilon$.
\end{lem}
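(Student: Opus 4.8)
The plan is to use the standard right-angled hexagon law relating the side lengths. Recall that for a right-angled hyperbolic hexagon with alternate sides $a,b,c$ and opposite sides $a',b',c'$ (so that $a'$ is opposite to $a$, etc., meaning $a'$ is the side not adjacent to $a$), one has the hexagon rule
\begin{equation*}
\cosh|c'| = \frac{\cosh|a|\cosh|b| + \cosh|c|}{\sinh|a|\sinh|b|},
\end{equation*}
together with the two analogous formulas obtained by cyclically permuting $(a,b,c)$ and $(a',b',c')$. This is the only genuine input; everything else is elementary asymptotics. See e.g.\ Buser or Ratcliffe for the identity.

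The key steps are as follows. First I would substitute $|a| = (1+\epsilon_1)\ell$, $|b| = (1+\epsilon_2)\ell$, $|c| = (1+\epsilon_3)\ell$ into the hexagon rule for $\cosh|c'|$. Since $\cosh t = \tfrac12 e^{t}(1 + e^{-2t})$ and $\sinh t = \tfrac12 e^{t}(1 - e^{-2t})$ for $t>0$, the numerator $\cosh|a|\cosh|b| + \cosh|c|$ is dominated, for large $\ell$, by $\tfrac14 e^{(2+\epsilon_1+\epsilon_2)\ell}$ while the denominator $\sinh|a|\sinh|b|$ is comparable to $\tfrac14 e^{(2+\epsilon_1+\epsilon_2)\ell}$; the extra term $\cosh|c| \sim \tfrac12 e^{(1+\epsilon_3)\ell}$ is of strictly lower order because $(1+\epsilon_3)\ell < (2+\epsilon_1+\epsilon_2)\ell$ (indeed $1+\epsilon_3 < 2 \le 2 + \epsilon_1 + \epsilon_2$, using $\epsilon_i \ge 0$ and $\epsilon_3 < 1$). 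Hence $\cosh|c'| \to 1$ as $\ell \to \infty$, uniformly over the compact parameter range $0 \le \epsilon_i < 1/2$; more precisely one gets a bound $\cosh|c'| \le 1 + C e^{-c\ell}$ for constants $c,C>0$ not depending on the $\epsilon_i$. Since $\cosh|c'| \le 1 + \tfrac12 |c'|^2 \cosh|c'|$ forces $|c'| \to 0$, this gives $|c'| \le \epsilon$ once $\ell \ge \ell_\epsilon$, and the same argument applied to the permuted identities handles $|a'|$ and $|b'|$. I would take $\ell_\epsilon$ to be the largest of the three thresholds, or simply note the three estimates are identical in form by symmetry of the hexagon rule.

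The only mild obstacle is bookkeeping: one must check that the lower-order term in the numerator really is negligible \emph{uniformly} in the $\epsilon_i$, which is where the hypothesis $\epsilon_i < 1/2$ (keeping the parameters in a fixed compact set bounded away from the degenerate regime) and $\epsilon_3 < 1$ (ensuring the strict inequality $1+\epsilon_3 < 2+\epsilon_1+\epsilon_2$ of exponential rates) are used. There is no geometric subtlety beyond invoking the hexagon rule; the estimate is a one-line exponential comparison once the formula is written down.
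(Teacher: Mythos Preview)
Your proposal is correct and follows essentially the same approach as the paper: both invoke the right-angled hexagon cosine rule $\cosh|c'| = (\cosh|a|\cosh|b| + \cosh|c|)/(\sinh|a|\sinh|b|)$, then use elementary exponential asymptotics together with the strict inequality $1+\epsilon_3 < 2+\epsilon_1+\epsilon_2$ to show $\cosh|c'|\to 1$ as $\ell\to\infty$, and finish by cyclic permutation. The paper's only cosmetic difference is that it rewrites the right-hand side as $1/(\tanh|a|\tanh|b|) + \cosh|c|/(\sinh|a|\sinh|b|)$ before taking the limit, which is the same computation in slightly different packaging.
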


\begin{proof}
By the hyperbolic cosine law for right-angled hexagons (see~\cite{ratcliffe}), we have that 
\begin{equation}
\cosh (|c'|) = \frac{\cosh (|a|) \cosh (|b|) + \cosh (|c|)}{\sinh(|a|) \sinh(|b|)},
\end{equation}
which we can write as 
\begin{equation}
\cosh (|c'|) = \frac{1}{\tanh(|a|) \tanh(|b|)} + \frac{\cosh (|c|)}{\sinh(|a|) \sinh(|b|)}
\end{equation}
We have that 
\begin{equation}
\frac{\cosh (|c|)}{\sinh(|a|) \sinh(|b|)} \asymp \frac{ e^{(1+\epsilon_3)\ell} }{ e^{(2+\epsilon_1+\epsilon_2 )\ell} } \ra 0, 
\quad \textup{for} \quad \ell \ra \infty,
\end{equation}
as $0 \leq \epsilon_i < 1/2$ for $1 \leq i \leq 3$. Further, as $\tanh(r) \ra 1$ for $r \ra \infty$, given any $\epsilon' >0$, there exists an $\ell_{\epsilon'}$ such that, if $\ell \geq \ell_{\epsilon'}$, then
\begin{equation}\label{eq_epsilon'}
\cosh (|c'|) \leq 1 + \epsilon'.
\end{equation}
Thus given an $\epsilon>0$, choose $\ell_{\epsilon'}$ such that~\eqref{eq_epsilon'} is satisfied with $\epsilon := \cosh^{-1}(1+ \epsilon')$.
For this $\ell_{\epsilon'}$ (with $\epsilon'$ depending on $\epsilon$ only), we have that
\begin{equation}
|c'| \leq \cosh^{-1}(1+ \epsilon') = \epsilon 
\end{equation}
Cyclically permuting $a, b,c$ and $a', b', c'$ gives a similar estimate for $a'$ and $b'$ and this finishes the proof.
\end{proof}

\begin{lem}\label{lem_ideal_triangle}
Let $T \subset \D^2$ be an ideal triangle with boundary $\partial T = \gamma_1 \cup \gamma_2 \cup \gamma_3$ and barycenter $0 \in \D^2$.
Let $\gamma \subset \D^2$ be a geodesic passing through $0 \in \D^2$. Then for an $i \in \{1,2,3\}$, $\gamma$ intersects $\partial T$ at a 
point $p \in \gamma_i$, such that $\pi/4 + \epsilon_0 \leq \angle( \gamma, \gamma_i )_p \leq \pi/2 $, where $\epsilon_0 \approx 0.24$.
\end{lem}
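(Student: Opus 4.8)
The plan is to reduce everything to the explicit ``symmetric'' model of $T$ and then combine the inscribed disk of $T$ with the trigonometry of hyperbolic right triangles. Since the isometry group of $\D^2$ acts transitively on ideal triangles and carries barycenters to barycenters, I would first normalise so that $T$ is the ideal triangle whose three vertices are equally spaced on $\partial\D^2$; then the barycenter is $0$, the inscribed disk $D(0,r_0)$ with $r_0=\tfrac12\log 3$ (the radius appearing in Lemma~\ref{lem_pants_disk}) is tangent to all three sides, and $\cosh r_0=2/\sqrt 3$. Let $m_i\in\gamma_i$ be the point of tangency; then $[0,m_i]$ is the perpendicular from $0$ to $\gamma_i$, of length $r_0$, and by the order‑three rotational symmetry of $T$ the three rays $[0,m_1)$, $[0,m_2)$, $[0,m_3)$ are pairwise at angle $2\pi/3$.

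Because $0\in\Int T$, $T$ is geodesically convex, and $\gamma$ meets each side in at most one point (two distinct complete geodesics meet at most once), the geodesic $\gamma$ leaves $T$ through exactly two of the three sides, say $\gamma_i$ and $\gamma_j$, at points $p_i$ and $p_j$. For such a side, consider the geodesic triangle with vertices $0$, $m_i$, $p_i$: it has a right angle at $m_i$, the leg $[0,m_i]$ has length $r_0$, and writing $\phi_i$ for the angle it makes at $0$ (between $\gamma$ and $[0,m_i]$) and $\psi_i=\angle(\gamma,\gamma_i)_{p_i}$ for the angle at $p_i$, the standard trigonometric identity for hyperbolic right triangles gives
\[
\cos\psi_i \;=\; \cosh(r_0)\,\sin\phi_i \;=\; \tfrac{2}{\sqrt 3}\,\sin\phi_i .
\]
In particular $\psi_i$ is strictly decreasing in $\phi_i$ on $[0,\pi/2)$, so the crossing with the smaller of $\phi_i,\phi_j$ is the one with the larger crossing angle.

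It then remains to bound $\min(\phi_i,\phi_j)$. Expressing the angles the line $\gamma$ makes with the three rays $[0,m_k)$ — three rays pairwise at angle $2\pi/3$ — an elementary computation shows that the two smallest of these three angles sum to exactly $\pi/3$ while the third is at least $\pi/3$; moreover the two sides that $\gamma$ actually crosses are precisely the two realising the two smallest values (each side ``faces'' its own tangency ray). Hence $\phi_i+\phi_j=\pi/3$, so $\min(\phi_i,\phi_j)\le\pi/6$, and for the corresponding crossing point $p_\bullet\in\gamma_\bullet$ one gets
\[
\cos\angle(\gamma,\gamma_\bullet)_{p_\bullet}\;\le\;\tfrac{2}{\sqrt 3}\,\sin\tfrac{\pi}{6}\;=\;\tfrac{1}{\sqrt 3},
\]
that is, $\angle(\gamma,\gamma_\bullet)_{p_\bullet}\ge\arccos(1/\sqrt 3)$, which is the constant $\pi/4+\epsilon_0$ of the statement; the upper bound $\le\pi/2$ holds by definition of the angle. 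The degenerate case in which one of the two exit points of $\gamma$ is an ideal vertex of $T$ is covered in the same way: there $\gamma$ is the perpendicular from $0$ to the opposite side continued to that vertex, so the genuine crossing is orthogonal and $\psi=\pi/2$.

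The step I expect to be the real obstacle is the geometric claim used in the third paragraph: that the two sides through which $\gamma$ exits $T$ are exactly the two whose tangency rays make the smallest angles with $\gamma$ — equivalently, that the $\phi$‑values attached to the crossed sides are the two that sum to $\pi/3$, rather than one of them being tied to the ``far'' tangency ray, which would wreck the estimate. To make this rigorous one must identify, for each $i$, the angular sector at $0$ subtended by the arc $\gamma_i$, check that these are the three $120^\circ$ sectors bisected by the rays $[0,m_i)$, and then read off which two sectors contain the two opposite directions of $\gamma$. Everything else is the trigonometric bookkeeping above.
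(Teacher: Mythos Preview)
Your argument is correct and is essentially the paper's own proof: both compute the crossing angle via the hyperbolic right-triangle identity $\cos\psi=\cosh(\tfrac12\log 3)\sin\phi=\tfrac{2}{\sqrt3}\sin\phi$ and identify the worst case as $\phi=\pi/6$, giving $\psi=\arccos(1/\sqrt3)=\pi/4+\epsilon_0$. The paper handles your flagged ``obstacle'' by a terse symmetry reduction---restricting the crossing point to the sub-arc of one side between the perpendicular foot and the point hit by the line at angle $\pi/6$ to that perpendicular---while your angular-sector description (the cone at $0$ over $\gamma_i$ is the open $2\pi/3$-sector bisected by $[0,m_i)$, since the two ideal endpoints of $\gamma_i$ sit at angular distance exactly $\pi/3$ on either side of $m_i$) is simply the explicit unpacking of that same reduction and is correct as sketched.
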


\begin{figure}[h]
\begin{center}
\psfrag{gamma_1}{$\gamma_1$}
\psfrag{gamma_2}{$\gamma_2$}
\psfrag{gamma_3}{$\gamma_3$}
\psfrag{gamma}{$\gamma$}
\psfrag{theta}{$\theta_1$}
\psfrag{ell}{$l_1$}
\psfrag{ell_0}{$l_0$}
\psfrag{D^2}{$\D^2$}
\psfrag{H}{$T$}
\psfrag{p_c}{$0$}
\psfrag{p}{$p$}
\includegraphics[scale=0.7]{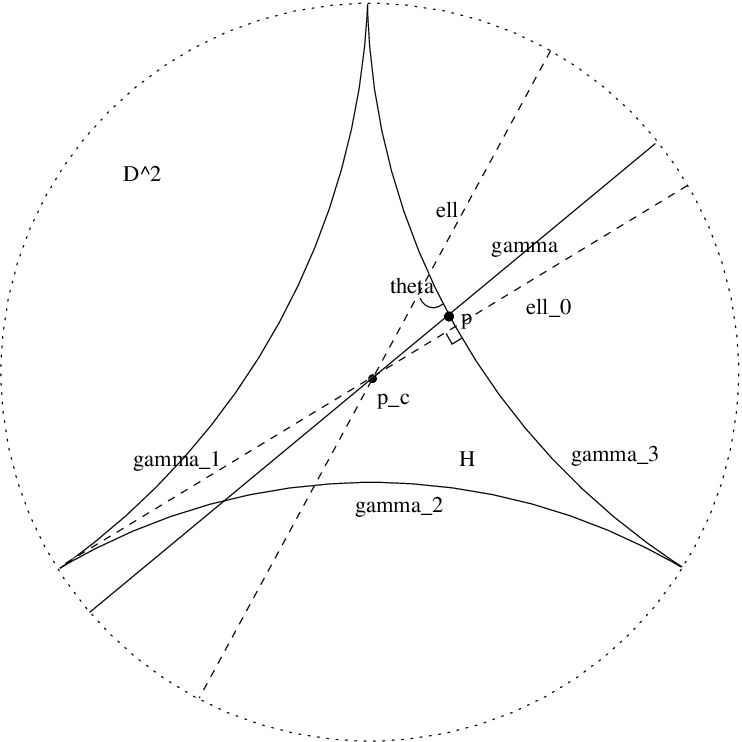}
\caption{Proof of Lemma~\ref{lem_ideal_triangle}.}
\label{fig_ideal_triangle}
\end{center}
\end{figure}

\begin{proof}
Let us label the boundary geodesics $\gamma_1, \gamma_2, \gamma_3$ of the ideal triangle $T$ as in Figure~\ref{fig_ideal_triangle}. 
Let $l_0 \subset \D^2$ be the axis of symmetry of $T$, relative to the symmetry that exchanges $\gamma_1$ and $\gamma_2$.
The distance of $0 \in \D^2$ to the point of intersection of $\ell_0$ with $\gamma_3$ is $\frac{1}{2} \log 3$. Let $l_1$ be the axis perpendicular
to the axis of symmetry relative to the symmetry that exchanges $\gamma_2$ and $\gamma_3$, see Figure~\ref{fig_ideal_triangle}.
Let $\theta_1$ be the angle between the axis $l_1$ and the geodesic $\gamma_3$. The angle between the axes $l_0$ and $l_1$ is 
$\pi/6$. Therefore, by the hyperbolic cosine law, the angle $\theta_1$ between $\gamma_3$ and $l_1$ is given by 
\[ \cos ( \theta_1 ) = \cosh \left( \frac{1}{2} \log 3 \right) \sin \left( \frac{\pi}{6} \right). \]
It is readily verified that $\theta_1 = \pi/4 + \epsilon_0$, with $\epsilon_0 \approx 0.24$. Suppose that the geodesic $\gamma$ that passes through $0 \in \D^2$ is such that $\gamma \cap \gamma_3 \neq \emptyset$. Denote $p$ the point of intersection of $\gamma$ with $\gamma_3$. By symmetry of the configuration, we may assume that $p$ is contained in the arc $\eta \subset \gamma_3$ cut out by the two intersection points of $l_0$ and $l_1$ with $\gamma_3$, which, up to symmetry, represents the extremal 
case. Thus we have that $\pi/4 + \epsilon_0 \leq \angle( \gamma, \gamma_i )_p \leq \pi/2$. 
This finishes the proof of the Lemma.
\end{proof}

\begin{proof}[Proof of Lemma~\ref{lem_large_angle}]
By Corollary~\ref{cor_exist_pants}, there exists a tight pair of pants $P \subset M$, i.e. a pair of pants $P$ with $\partial P = \gamma_1 \cup \gamma_2 \cup \gamma_3$, with the property that the three boundary geodesics $\gamma_1 , \gamma_2, \gamma_3$ are $3 \delta_0$-short. 
Lifting the pair of pants $P$ to the cover $\D^2$, it unfolds to two right-angled hexagons $H, H' \subset \D^2$, each of which contains exactly a half of the component of the lift $\widetilde{\gamma}_i$ of $\gamma_i$ to $\D^2$ as its alternate boundary arcs, with $1 \leq i \leq 3$. Restrict 
to $H$ and denote $\eta_i$ with $1 \leq i \leq 3$ the alternating boundary arcs. As the length of $\eta_i$ is exactly half of that of $\gamma_i$, 
with $1 \leq i \leq 3$, and these are $3 \delta_0$-short, the lengths $\eta_i$ satisfy the length requirement of Lemma~\ref{lem_hexagon},
so that, given any $\epsilon > 0$ there exists a $K>1$ such that the lengths of the three sides of the hexagon $H$ opposite to $\eta_i$ are of 
length at most $\epsilon$, as $\lambda(M) \ra \infty$ for $K \ra 1$ by Lemma~\ref{lem_bounds_inj_rad}.

Therefore, by normalizing by a suitable element of $\Mob(\D^2)$ if necessary, the hexagon $H \subset \D^2$ converges on a compact disk $D(0, 10)$ to an ideal triangle with barycenter $0 \in \D^2$, for $K \ra 1$. In particular, by Lemma~\ref{lem_ideal_triangle}, there exists 
$1 < K_2 \leq K_1$, such that any geodesic $\gamma' \subset \D^2$ passing through $0 \in \D^2$ intersects one of the arcs $\eta_i$ of $H$ 
under an angle at least $\pi/4 + \epsilon_0/2$, with $\epsilon_0$ as in Lemma~\ref{lem_ideal_triangle}. As geodesics in $\D^2$ passing near $0 \in \D^2$ are almost straight lines, there exists an $\epsilon_1 >0$ such that any geodesic $\gamma' \subset \D^2$ passing through the disk $D(0,\epsilon_1) \subset \D^2$ intersect one of the arcs $\eta_i$ at an angle at least $\pi/4$. By choosing $K_2>1$ smaller if necessary, we can be sure that $C_2(K_2) \leq \epsilon_1$, where $C_2(K)$ is the constant of Lemma~\ref{lem_approx_geodesic}.

Let $\gamma_0 \subset M$ be a $\delta_0$-short geodesic and let $p_0 \in \gamma_0$. Choose the point $p \in P \subset M$ in the tight pair of pants,
which without loss of generality we may assume to correspond to $0 \in \D^2$ in the lift. Choose an element $f \in \FF_K(M)$ such that $f(p_0) = p$ and denote $\gamma \subset M$ the geodesic homotopic to $f(\gamma_0)$. By our choice of $K_3$, the lift $\widetilde{\gamma}$ of $\gamma$ will intersect at least one of the three arcs $\eta_1, \eta_2$ or $\eta_3$ at an angle $\angle ( \widetilde{\gamma}, \eta_i)_q \geq \pi/4$. 
In other words, $\gamma$ intersects one of the three boundary geodesics $\gamma_1, \gamma_2$ or $\gamma_3$ at an angle at least $\pi/4$.
Further, as $K_2 \leq K_1$, we have that 
\[ K_2(1+\delta_0) \leq K_1 (1+\delta_0) \leq 1 + 3 \delta_0, \] 
as $K_1 (1+\delta_0/2) \leq 1 + \delta_0$, and thus $\gamma$ is $3 \delta_0$-short. This proves the Lemma.
\end{proof}

\subsection{The three-circle Lemma}\label{subsec_three_circle}

A {\em triangle} $T \subset M$ is a subsurface of $M$ such that its boundary consists of a simple closed curve comprised of three geodesic arcs. The triangle $T$ is said to be {\em trivial} if the simple closed curve $\partial T$ is homotopically trivial and {\em non-trivial} otherwise. 
\index{triangle!trivial}
\index{triangle!non-trivial}

\begin{defn}[Three-circle configuration]\label{defn_three_circle}\index{three-circle configuration}
A {\em three-circle configuration} is a union of three $6\delta_0$-short geodesics $\gamma_1, \gamma_2, \gamma_3$, such that
each pair of geodesics intersect in exactly two points and the connected components $M \setminus \bigcup_{j=1}^3 \gamma_j$
consists of exactly $8$ triangles, see Figure~\ref{fig_three_circle}.
\end{defn}

\begin{figure}[h]
\begin{center}
\psfrag{C_1}{$\gamma_2$}
\psfrag{C_2}{$\gamma_1$}
\psfrag{C_3}{$\gamma_3$}
\psfrag{p_1}{$p_1$}
\psfrag{p_2}{$p_2$}
\psfrag{p_3}{$p_3$}
\psfrag{T_1}{$T_1$}
\psfrag{T_2}{$T_2$}
\psfrag{T_3}{$T_3$}
\psfrag{T_4}{$T_4$}
\psfrag{T_6}{$T_6$}
\psfrag{T_5}{$T_5$}
\psfrag{T_7}{$T_7$}
\psfrag{T_8}{$T_8$}
\includegraphics[scale=0.8]{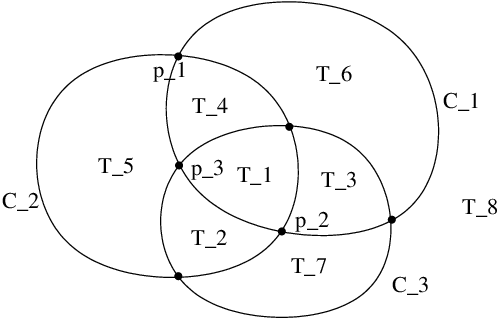}
\caption{Three-circle configuration.}
\label{fig_three_circle}
\end{center}
\end{figure}

\begin{lem}\label{lem_triangle_short_arcs}
Let $\gamma_1, \gamma_2, \gamma_3 \subset M$ comprise a three-circle configuration and let $\{T_j\}_{j=1}^8$ be the collection of triangles associated to the configuration. If a triangle $T_j$ for some $1 \leq j \leq 8$ is trivial, then the length of any of the three geodesic arcs that comprise $\partial T_j$ is at most $\lambda(M)/7$.
\end{lem}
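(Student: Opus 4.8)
We want to bound the length of the sides of a trivial triangle $T_j$. The key geometric idea is that a trivial triangle bounds a disk, and the three geodesic arcs forming $\partial T_j$ are sub-arcs of the three $6\delta_0$-short geodesics $\gamma_1,\gamma_2,\gamma_3$; we want to play these arcs off against the essential curves in the configuration. First I would set up notation: label the three vertices of $T_j$ as intersection points among the $\gamma_i$, and let $a\subset\gamma_1$, $b\subset\gamma_2$, $c\subset\gamma_3$ be the three geodesic arcs with $\partial T_j = a\cup b\cup c$. Since $T_j$ is trivial, $\partial T_j$ is homotopically trivial, so it bounds a disk $\Delta\subset M$.

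\textbf{Main step.} The heart of the argument is to show that if any one of these arcs, say $a$, were long — of length $> \lambda(M)/7$ — then we could produce a homotopically nontrivial closed curve that is too short, contradicting the definition of $\lambda(M)$. The mechanism: take the arc $a\subset\gamma_1$. Its two endpoints are two intersection points of $\gamma_1$ with the other two geodesics; the complement $\gamma_1\setminus\{\text{those two endpoints}\}$ has another arc $a'$, and $a\cup a'=\gamma_1$, so $|a|+|a'|=|\gamma_1|\le(1+6\delta_0)\lambda(M)$. Now I would combine $a$ with the arc $b\cup c$ on the other two geodesics (note $b\cup c$ together with $a$ bounds the disk $\Delta$, so $a$ is homotopic rel endpoints to $b\cup c$ inside $M$). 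By Lemma~\ref{lem_curves_properties}(iii), replacing $a$ by the complementary arc $a'$ of $\gamma_1$ and concatenating with $b\cup c$ — more precisely, forming the closed curve $a'\cup(b\cup c)$, which is homotopic to $\gamma_1$ since $a$ is homotopic to $b\cup c$ — we get a closed curve homotopic to $\gamma_1$, hence essential, of length $\le |a'| + |b| + |c|$. Meanwhile $|b|, |c|$ are sub-arcs of $6\delta_0$-short geodesics. The quantitative bookkeeping: since each $|\gamma_i|\le(1+6\delta_0)\lambda(M)$ and $6\delta_0 = 1/63$, the arcs $b,c$ each have length $< (1+6\delta_0)\lambda(M)$; being careful about which sub-arcs actually appear and using that the disk $\Delta$ forces $b\cup c$ to be the ``short way around,'' one derives $|a| \le \lambda(M)/7$ or else produces an essential curve of length $<\lambda(M)$.

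\textbf{Alternative / cleaner route.} A more symmetric way: suppose for contradiction that one of the arcs has length $>\lambda(M)/7$. Since $\Delta$ is a disk with $\partial\Delta = a\cup b\cup c$, each arc is homotopic rel endpoints to the concatenation of the other two (reversed). Pick the \emph{longest} arc, say $|a| > \lambda(M)/7$. On $\gamma_1$, the complementary arc $a'$ satisfies $|a'| = |\gamma_1| - |a| < (1+6\delta_0)\lambda(M) - \lambda(M)/7$. Then the curve $a' \cup \bar b \cup \bar c$ (going the complementary way on $\gamma_1$, then back along $b$ and $c$) is homotopic to $\gamma_1$, hence essential, with length $< (1+6\delta_0)\lambda(M) - \lambda(M)/7 + |b| + |c|$. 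To make this $<\lambda(M)$ one needs $|b|+|c| < \lambda(M)/7 - 6\delta_0\lambda(M)$, which is \emph{not} automatic — so the honest argument must instead also choose $a$ among the arcs bounding $T_j$ to be simultaneously long while $b,c$ are short, using that the three arcs together have total length at most the sum of half-lengths coming from the three geodesics, i.e. $|a|+|b|+|c| \le \tfrac12\big(|\gamma_1|+|\gamma_2|+|\gamma_3|\big) \le \tfrac{3}{2}(1+6\delta_0)\lambda(M)$ (each $\gamma_i$ contributes two arcs to the $8$-triangle configuration, split appropriately). Combined with the essential-curve bound this pins down each $|a_j| \le \lambda(M)/7$.

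\textbf{The main obstacle.} The real difficulty is the combinatorial/geometric step identifying that the closed curve built from one arc of $T_j$ plus complementary pieces of the other geodesics is (a) homotopically nontrivial and (b) short enough to contradict minimality of $\lambda(M)$ — this requires knowing precisely how the arc $a$ sits relative to the full geodesic $\gamma_1$ and invoking Lemma~\ref{lem_curves_properties}(iii) for the nontriviality, together with the disk $\Delta$ giving the homotopy that lets us swap $a$ for $b\cup c$. The numerics ($6\delta_0 = 1/63$, target $\lambda(M)/7$, and the slack $\tfrac17 - \tfrac1{63} = \tfrac8{63}$) have been chosen precisely so this works, so the remaining work is just careful arc-length accounting once the topological picture is correctly set up.
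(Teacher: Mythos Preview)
Your proposal has a genuine gap: the curve you build gives you nothing beyond the triangle inequality you already have. Concretely, if $\partial T_j = a\cup b\cup c$ with $a\subset\gamma_1$, and $a'$ is the complementary arc of $\gamma_1$, then the closed curve $a'\cup b\cup c$ is homotopic to $\gamma_1$. Since $\gamma_1$ is the geodesic in its homotopy class, the only inequality this yields is $|\gamma_1|\le |a'|+|b|+|c|$, i.e.\ $|a|\le |b|+|c|$ --- precisely the triangle inequality coming from the trivial disk $\Delta$. No choice of ``longest arc'' rescues this: you never produce an essential curve in a \emph{new} homotopy class, so you never threaten the bound $\lambda(M)$. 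Your fallback inequality $|a|+|b|+|c|\le \tfrac{3}{2}(1+6\delta_0)\lambda(M)$ is also unjustified (each $\gamma_i$ is cut into \emph{four} arcs by the other two geodesics, not two, and there is no a priori reason the arc lying on $\partial T_j$ is a ``short half''); and even if it held, combining it with $|a|\le|b|+|c|$ only gives $|a|\le \tfrac34(1+6\delta_0)\lambda(M)$, nowhere near $\lambda(M)/7$.

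The paper's argument is structurally different: it brings in the six arcs \emph{adjacent} to $T$ (two on each $\gamma_i$, on either side of the edge of $T$), and forms six auxiliary closed curves --- three ``$\alpha$'' curves coming from the two-circle lemma applied to each pair $\gamma_i,\gamma_j$, and three ``$\beta$'' curves each made of one adjacent arc from each geodesic. Triviality of $T$ makes each $\beta_i$ homotopic to the corresponding $\alpha_i$, so all six are essential with lengths trapped in $[\lambda(M),(1+18\delta_0)\lambda(M)]$. One then has a linear system of two-sided inequalities in nine arc lengths; adding and subtracting these in the right combinations isolates each edge of $T$ and forces it below $54\delta_0\lambda(M)=\lambda(M)/7$. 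The point you are missing is that the bound comes from the interaction of \emph{upper and lower} bounds on several essential curves simultaneously, not from a single short-curve contradiction.
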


\begin{proof}
In what follows, we write the juxtaposition of arcs to denote the closed curve comprised by concatenating the arcs in counterclockwise direction. Suppose that a triangle $T:= T_j$ for some $1 \leq j \leq 8$ is trivial and let the labeling be as given in Figure~\ref{fig_three_circle_2}, where $\partial T = x_2 z_2 y_2$. The geodesics $\gamma_1, \gamma_2, \gamma_3$ are labeled $\gamma_s, \gamma_t, \gamma_k$ with $1,2,3$ some permutation
of the letters $s,t,k$, where arcs $x_i \subset \gamma_s, y_i \subset \gamma_t$ and $z_i \subset \gamma_k$ with $1 \leq i \leq 3$.

\begin{figure}[h]
\begin{center}
\psfrag{x_1}{$x_1$}
\psfrag{x_2}{$x_2$}
\psfrag{x_3}{$x_3$}
\psfrag{y_1}{$y_1$}
\psfrag{y_2}{$y_2$}
\psfrag{y_3}{$y_3$}
\psfrag{z_1}{$z_1$}
\psfrag{z_2}{$z_2$}
\psfrag{z_3}{$z_3$}
\psfrag{C_1}{$\gamma_s$}
\psfrag{C_2}{$\gamma_t$}
\psfrag{C_3}{$\gamma_k$}
\psfrag{T}{$T$}
\includegraphics[scale=0.8]{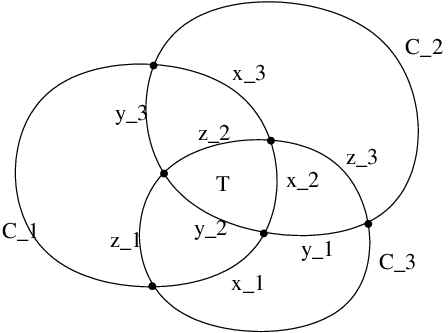}
\caption{Proof of Lemma~\ref{lem_triangle_short_arcs}.}
\label{fig_three_circle_2}
\end{center}
\end{figure}

Define the simple closed curves 
\begin{equation}\label{eq_pf_thma_curves_a}
\alpha_1 = x_2 x_3 y_3 y_2,~ \alpha_2 = x_1 x_2 z_2 z_1~\textup{and}~\alpha_3 = y_2 y_1 z_3 z_2.
\end{equation}
By Lemma~\ref{lem_curves_properties} (iii), the simple closed curves $\alpha_i$, with $1 \leq i \leq 3$, are non-trivial. 
Further, as each of the three geodesics $\gamma_s, \gamma_t$ and $\gamma_k$ is $6 \delta_0$-short (by definition~\ref{defn_three_circle}), 
by the two-circle Lemma we thus have that 
\begin{equation}\label{eq_pf_thma_2}
\lambda(M) \leq | \alpha_i | \leq (1+ 18 \delta_0) \lambda(M),
\end{equation}
for $1 \leq i \leq 3$. Next, consider the simple closed curves
\begin{equation}\label{eq_pf_thma_curves_b}
\beta_1 = x_3 y_3 z_2,~ \beta_2 = x_1 y_2 z_1~\textup{and}~ \beta_3 = y_1 z_3 x_2. 
\end{equation}
As the curves $\alpha_i$ are non-trivial, and the triangle $T$ is trivial, the curve $\beta_i$ is homotopic to $\alpha_i$, for $1 \leq i \leq 3$, 
and therefore non-trivial. Moreover, as $T$ is trivial, by the triangle-inequality applied to $\partial T$, we have that
\begin{equation}\label{eq_triangle_ineq}
|x_2| \leq |y_2| + |z_2|, ~|y_2| \leq |x_2| + |z_2|~\textup{and}~|z_2| \leq |x_2| + |y_2|.
\end{equation}
Combining~\eqref{eq_pf_thma_2} with~\eqref{eq_triangle_ineq}, it is readily verified that 
\begin{equation}
\lambda(M) \leq | \beta_j | \leq (1+ 18 \delta_0) \lambda(M),
\end{equation}
for $1 \leq j \leq 3$. Summing up the lengths of the arcs that constitute the closed curves $\beta_1, \beta_2$ and $\beta_3$, 
cf. \eqref{eq_pf_thma_curves_b}, and reordering the terms, it follows that
\begin{equation}\label{eq_sum_3b}
3 \lambda(M) \leq |x_1| + |x_2| + |x_3| + |y_1| + |y_2| + |y_3| + |z_1| + |z_2| + |z_3| \leq 3(1+ 18 \delta_0) \lambda(M).
\end{equation}
The length of $\alpha_1$ can by~\eqref{eq_pf_thma_curves_a} be expressed as the sum of the lengths of its constituent arcs and estimated by
\begin{equation}\label{eq_sum_alpha_1}
\lambda(M) \leq |x_2| + |x_3| + |y_2| + |y_3| \leq (1+ 18 \delta_0) \lambda(M).
\end{equation}
Subtracting~\eqref{eq_sum_alpha_1} from~\eqref{eq_sum_3b}, we obtain the estimate
\begin{equation}\label{eq_subtract_1}
2\lambda(M) - 18 \delta_0 \lambda(M)  \leq |x_1| + |y_1| + |z_1| + |z_2| + |z_3| \leq 2 \lambda(M) + 54 \delta_0 \lambda(M)
\end{equation}
However, adding up the lengths of $\beta_2$ and $\beta_3$, we must have that
\begin{equation}\label{eq_add_beta}
2 \lambda(M) \leq |x_1| + |x_2| + |y_1| + |y_2| + |z_1| + |z_3| \leq 2(1+ 18 \delta_0) \lambda(M)
\end{equation}
Therefore, subtracting~\eqref{eq_add_beta} from~\eqref{eq_subtract_1}, we obtain
\begin{equation}\label{eq_sum_less_first}
-54 \delta_0 \lambda(M) \leq |z_2| - ( |x_2| + |y_2| ) \leq 54 \delta_0 \lambda(M).
\end{equation}
Repeating the same argument for $\alpha_2$ and $\alpha_3$, one obtains
\begin{eqnarray}\label{eq_sum_less_second}
-54 \delta_0 \lambda(M) \leq |y_2| - ( |x_2| + |z_2| ) \leq 54 \delta_0 \lambda(M). \\ \label{eq_sum_less_last}
-54 \delta_0 \lambda(M) \leq |x_2| - ( |y_2| + |z_2| ) \leq 54 \delta_0 \lambda(M).
\end{eqnarray}
It then follows from~\eqref{eq_sum_less_first},~\eqref{eq_sum_less_second} and~\eqref{eq_sum_less_last} that
\begin{equation}
|x_2| \leq 54 \delta_0 \lambda(M), ~ |y_2| \leq 54 \delta_0 \lambda(M) ~\textup{and}~ |z_2| \leq 54 \delta_0 \lambda(M).
\end{equation}
As $\delta_0 = 1/378$, it thus follows that the lengths of the arcs $x_2,y_2$ and $z_2$ have to be at most $\lambda(M)/7$.
\end{proof}

In Lemma~\ref{lem_exist_three_circle} below, we prove the existence of certain three-circle configurations satisfying additional geometrical 
properties. The proof uses the following geometric estimate. 

\begin{lem}\label{lem_geodesics_intersect_1}
There exists a constant $1 < K_3 \leq K_2$, such that if $M$ is $K$-quasi-conformally homogeneous with $1 < K \leq K_3$, then the following holds. 
Let $\gamma_1, \gamma_2 \subset M$ be two $3 \delta_0$-short geodesics intersecting at a point $p \in \gamma_1 \cap \gamma_2$,
such that $\angle ( \gamma_1, \gamma_2 )_p \geq \pi/4$. Let $\gamma_3 \subset M$ be a geodesic and let $q_0 \in \gamma_3$.
Let $f \in \FF_K(M)$ with $f(p) = q_0$ and let $\gamma_1', \gamma_2'$ be the geodesic homotopic to $f(\gamma_1)$ and $f(\gamma_2)$ respectively. 
Then both $\gamma_1'$ and $\gamma_2'$ are $6 \delta_0$-short and at least one of $\gamma_1'$ or $\gamma_2'$ intersects $\gamma_3$ transversely at a point $q \in \gamma_3$ with $d(q,q_0) \leq 1/20$.
\end{lem}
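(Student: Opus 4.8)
The plan is to lift everything to the universal cover $\D^2$ and combine the two approximation estimates available to us: Lemma~\ref{lem_approx_homeo_disk}, which says $f$ is uniformly close, on a fixed compact scale, to a M\"obius transformation, and Lemma~\ref{lem_approx_geodesic}, which controls the geodesic straightening of $f(\gamma_i)$. Together these pin down $\gamma_1'$ and $\gamma_2'$ near $q_0$ both in position and in direction, after which the conclusion is a soft consequence of the triangle inequality for angles. The fact that $\gamma_1'$ and $\gamma_2'$ are $6\delta_0$-short requires nothing new: by Lemma~\ref{lem_length_geo}, $|\gamma_i'|\le K|\gamma_i|\le K(1+3\delta_0)\lambda(M)$, and $K(1+3\delta_0)\le 1+6\delta_0$ already for $K\le K_1$ by the choice of $\delta_0$; this imposes one (mild) upper bound on $K_3$.

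For the transversality claim, fix a lift $\widetilde p\in\D^2$ of $p$ and a lift $\widetilde f\colon\D^2\ra\D^2$ of $f$, and put $\widetilde q_0:=\widetilde f(\widetilde p)$, a lift of $q_0$. Let $\widetilde\gamma_1,\widetilde\gamma_2$ be the lifts of $\gamma_1,\gamma_2$ through $\widetilde p$; these are complete geodesics with $\angle(\widetilde\gamma_1,\widetilde\gamma_2)_{\widetilde p}=\angle(\gamma_1,\gamma_2)_p\ge\pi/4$, and let $\widetilde\gamma_3$ be the (complete geodesic) lift of $\gamma_3$ through $\widetilde q_0$. Writing $h_i\in\Gamma$ for the primitive hyperbolic element with axis $\widetilde\gamma_i$, the curve $\widetilde f(\widetilde\gamma_i)$ is invariant under $\widetilde f h_i\widetilde f^{-1}\in\Gamma$, hence stays within bounded distance of the axis of that element, which is one specific lift $\widetilde\gamma_i'$ of $\gamma_i'$; Lemma~\ref{lem_approx_geodesic} makes this quantitative, $\widetilde f(\widetilde\gamma_i)\subset N_{C_2}(\widetilde\gamma_i')$ with $C_2:=C_2(K)$, and in particular $\widetilde d(\widetilde q_0,\widetilde\gamma_i')\le C_2$ since $\widetilde q_0\in\widetilde f(\widetilde\gamma_i)$. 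Applying Lemma~\ref{lem_approx_homeo_disk} to $\widetilde f$ on $D(\widetilde p,1/2)$ gives $\mu\in\Mob(\D^2)$ with $\widetilde d(\widetilde f(z),\mu(z))\le C_1:=C_1(K,1/2)$ there, so $\widetilde f(\widetilde\gamma_i\cap D(\widetilde p,1/2))$ is at Hausdorff distance $\le C_1$ from the genuine geodesic segment $\sigma_i:=\mu(\widetilde\gamma_i\cap D(\widetilde p,1/2))$ of length $1$, and — as $\mu$ is an isometry — the $\sigma_i$ issue from the common point $\mu(\widetilde p)$ (within $C_1$ of $\widetilde q_0$) at mutual angle $\ge\pi/4$. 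Hence a sub-arc of $\widetilde\gamma_i'$ of length $\ge 1-O(\epsilon)$ passing within $\epsilon:=C_1+C_2$ of $\widetilde q_0$ lies within $\epsilon$ of the unit-length geodesic segment $\sigma_i$; as a complete geodesic that shadows a geodesic segment of fixed length $\ge 1$ within $\epsilon$ has direction differing from it by $O(\epsilon)$ (elementary hyperbolic trigonometry), the directions of $\widetilde\gamma_1'$ and $\widetilde\gamma_2'$ near $\widetilde q_0$ differ, as unoriented lines, by at least $\pi/4-c\epsilon$ for an absolute constant $c$.

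Now work on the space of unoriented tangent directions at $\widetilde q_0$: a circle of circumference $\pi$ on which the angle between directions is a metric bounded by $\pi/2$. Let $w_3$ be the direction of $\widetilde\gamma_3$ and $w_i'$ the direction of $\widetilde\gamma_i'$ near $\widetilde q_0$, transported to $\widetilde q_0$ (which costs only $O(\epsilon)$, the relevant points lying within $2\epsilon$ of $\widetilde q_0$); since $\angle(w_1',w_2')\ge\pi/4-c\epsilon$, the triangle inequality forces $\angle(w_j',w_3)\ge\frac{1}{2}(\pi/4-c\epsilon)\ge\pi/9$ for some $j\in\{1,2\}$, once $\epsilon$ is small. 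Relabelling so that $j=1$: the complete geodesic $\widetilde\gamma_1'$ passes within $2\epsilon$ of the point $\widetilde q_0\in\widetilde\gamma_3$ while making angle $\ge\pi/9-O(\epsilon)$ with $\widetilde\gamma_3$, so by elementary hyperbolic trigonometry $\widetilde\gamma_1'$ meets $\widetilde\gamma_3$ transversely at a point $\widetilde q$ with $\widetilde d(\widetilde q,\widetilde q_0)\le 20\,\epsilon$ (say). Finally choose $K_3\in(1,K_2]$ so small that each of the ``$\epsilon$ small'' requirements above is met, that $20\epsilon\le 1/20$, and that $K_3(1+3\delta_0)\le1+6\delta_0$; this is possible since $C_1(K,1/2)\ra0$ and $C_2(K)\ra0$ as $K\ra1$. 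Projecting $\widetilde q$ down to $M$ then yields a transverse intersection $q\in\gamma_1'\cap\gamma_3$ with $d(q,q_0)\le\widetilde d(\widetilde q,\widetilde q_0)\le1/20$, as required.

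The only genuinely non-formal ingredients are the identification of the correct lift $\widetilde\gamma_i'$ near $\widetilde q_0$ (via the invariance of $\widetilde f(\widetilde\gamma_i)$ under $\widetilde f h_i\widetilde f^{-1}$) together with the transfer of Lemma~\ref{lem_approx_geodesic} to that lift, and the upgrade of the $C^0$-control supplied by Lemmas~\ref{lem_approx_homeo_disk} and~\ref{lem_approx_geodesic} to $C^1$-control on $\widetilde\gamma_1',\widetilde\gamma_2'$ near $\widetilde q_0$, i.e.\ making quantitative, with an explicit constant, the principle that a complete geodesic shadowing a fixed-length geodesic segment within $\epsilon$ is nearly parallel to it. Granting these, the dichotomy ``one of $\gamma_1',\gamma_2'$ crosses $\gamma_3$ at a definite angle, hence within $1/20$ of $q_0$'' is forced purely by the triangle inequality for angles and the smallness of $\epsilon$.
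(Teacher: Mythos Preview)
Your proof is correct and follows the same approach as the paper: both use Lemmas~\ref{lem_approx_homeo_disk} and~\ref{lem_approx_geodesic} to pin down $\gamma_1',\gamma_2'$ near $q_0$ in position and direction (preserving a definite mutual angle), and then invoke an elementary angle dichotomy to force one of them across $\gamma_3$ close to $q_0$. The paper's version is only cosmetically different---it drops a perpendicular $\eta$ from the nearby intersection point $q_1\in\gamma_1'\cap\gamma_2'$ onto $\gamma_3$, observes that one of the $\gamma_i'$ makes angle $\le 2\pi/5$ with $\eta$ (equivalently, angle $\ge\pi/10$ with $\gamma_3$), and reads off $d(q,q_0)\le 1/20$ from the hyperbolic sine law in the resulting right triangle---which is precisely your triangle-inequality-for-angles argument with explicit constants in place of $O(\epsilon)$.
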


\begin{proof}
First, by choosing $1 < K_3 \leq K_2$, we have that 
\[ K_3( 1+ 3 \delta_0) \leq 1 + 6 \delta_0, \] 
as $K_2 \leq K_1$ and $K_1( 1+ 3 \delta_0) \leq 1+ 6 \delta_0$. Let $\gamma_1, \gamma_2 \subset M$ be two $3 \delta_0$-short geodesics intersecting at a point $p \in \gamma_1 \cap \gamma_2$, such that $\angle ( \gamma_1, \gamma_2 )_p \geq \pi/4$. Let $\gamma_3 \subset M$ be a geodesic and let $q_0 \in \gamma_3$, where $f(p) = q_0$. Then the geodesics $\gamma_1', \gamma_2'$ homotopic to $f(\gamma_1)$ and $f(\gamma_2)$ respectively are $6 \delta_0$-short, as $\gamma_1$ and $\gamma_2$ are $3 \delta_0$-short. By Lemma~\ref{lem_approx_homeo_disk}, $f$ is approximated by a M\"obius transformation on a compact disk. Further, by Lemma~\ref{lem_approx_geodesic}, the geodesic $\gamma_i'$ stays close to $f(\gamma_i)$, for $i=1,2$. Therefore, by choosing $K_3$ small enough, we have that 
\begin{itemize}
\item[\textup{(i)}] $\gamma_1'$ and $\gamma_2'$ intersect at a point $q_1 \in M$ with $d(q_1,q_0) \leq 1/100$, 
where $q_0 \in \gamma_3$, and
\item[\textup{(ii)}] $\theta := \angle ( \gamma_1', \gamma_2')_{q_1} \geq \frac{\pi}{5}$.
\end{itemize} 

\begin{figure}[h]
\begin{center}
\psfrag{p}{$q_1$}
\psfrag{q'}{$q_2$}
\psfrag{q''}{$q_3$}
\psfrag{eta}{$\eta$}
\psfrag{gamma_1}{$\gamma_1'$}
\psfrag{gamma_2}{$\gamma_2'$}
\psfrag{gamma_3}{$\gamma_3$}
\psfrag{theta}{$\theta$}
\psfrag{alpha}{$\theta'$}
\includegraphics[scale=0.9]{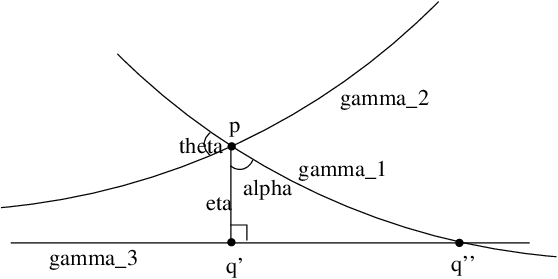}
\caption{Proof of Lemma~\ref{lem_geodesics_intersect_1}.}
\label{fig_lines_intersect}
\end{center}
\end{figure}

Let $\eta \subset M$ be the arc emanating from $q_1$ projecting perpendicularly onto $\gamma_3$ at the point $q_2 \in \gamma_3$. 
As $d(q_1,q_0) \leq 1/100$ and $q_2 \in \gamma_3$, we have that $| \eta | \leq 1/100$. Furthermore, as $\theta \geq \pi/5$, 
at least one of the geodesics $\gamma_i'$ with $i=1,2$ intersects the arc $\eta$ at an angle at most $2\pi/5$. 
Without loss of generality, we may suppose this is the case for $\gamma_1'$, i.e. that 
\[ \theta' := \angle (\eta, \gamma_1')_{q_1} \leq \frac{2\pi}{5}, \] 
see Figure~\ref{fig_lines_intersect}. If we consider the (embedded) geodesic triangle with vertices $q_1, q_2$ and $q_3$, combined with 
$\theta' \leq 2 \pi/5$ and $|\eta| \leq 1/100$, it follows from the hyperbolic sine law that 
\[ d(q_1, q_3) \leq 4/100. \] 
As $d(q_1,q_0) \leq 1/100$, we have that $d(q_0,q_2) \leq 1/100$. Further, we have that $d(q_2, q_3) \leq d(q_1, q_3)$ and thus 
\[ d(q_0, q_3) \leq d(q_0,q_2) + d(q_2, q_3) \leq 1/100 + 4/100 = 1/20. \]
Thus setting $q := q_3$ finishes the proof.
\end{proof}

\begin{lem}[Three-circle Lemma]\label{lem_exist_three_circle}
If $M$ is $K$-quasiconformally homogeneous for $1 < K \leq K_3$, then there exists
a three-circle configuration consisting of simple closed geodesics $\gamma_1, \gamma_2, \gamma_3 \subset M$, such that 
\begin{itemize}
\item[\textup{(i)}] $\gamma_1 \cup \gamma_2$ is a two-circle configuration, and 
\item[\textup{(ii)}] $\gamma_3$ is a $6 \delta_0$-short geodesic intersecting the arc $\eta_3 \subset \gamma_2$ at a point $p_3$ for which  
\[ \left( \frac{1}{4} - \frac{1}{20} \right) \lambda(M) \leq d ( p_j, p_3 ) \leq \left( \frac{1}{4} + \frac{1}{20} \right) \lambda(M), \]
with $j = 1,2$, in the labeling of Figure~\ref{fig_two_circle}, and 
\item[\textup{(iii)}] $\gamma_3$ intersects the interior of the arc $\eta_i$ in exactly one point for every $1 \leq i \leq 4$.
\end{itemize}
\end{lem}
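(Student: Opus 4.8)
The plan is to build the three-circle configuration by transporting a two-circle configuration along a quasiconformal homeomorphism, using Lemma~\ref{lem_geodesics_intersect_1} to control where the third circle crosses the second.

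\medskip

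\textbf{Step 1: Set up the configuration.} By the Two-circle Lemma (Lemma~\ref{lem_two_circles}) applied with $\delta = \delta_0$, there exists a two-circle configuration $\gamma_1 \cup \gamma_2$ with intersection points $p_1, p_2$ and the four arcs $\eta_1, \eta_2 \subset \gamma_1$, $\eta_3, \eta_4 \subset \gamma_2$ as in Figure~\ref{fig_two_circle}, each of length between $\tfrac{\lambda(M)}{2} - \tfrac{\delta_0}{2}\lambda(M)$ and $\tfrac{\lambda(M)}{2} + \tfrac{3\delta_0}{2}\lambda(M)$. On the other hand, by Lemma~\ref{lem_large_angle} (valid for $K \leq K_2$, hence for $K \leq K_3$), there exist two $3\delta_0$-short geodesics $\sigma_1, \sigma_2 \subset M$ intersecting at a point $q$ with $\angle(\sigma_1,\sigma_2)_q \geq \pi/4$. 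Now choose a point $p_3'$ on the arc $\eta_3 \subset \gamma_2$ that lies at distance exactly $\tfrac14 \lambda(M)$ from $p_1$ along $\gamma_2$ (possible since $|\eta_3| \geq \tfrac{\lambda(M)}{2} - \tfrac{\delta_0}{2}\lambda(M) > \tfrac14\lambda(M)$ when $\delta_0 < 1$); then by the two-circle length bounds $p_3'$ is also within $(\tfrac14 + \tfrac{1}{20})\lambda(M)$ of $p_2$ along $\gamma_2$. By $K$-quasiconformal homogeneity, pick $f \in \FF_K(M)$ with $f(q) = p_3'$, and set $\gamma_3$ to be the geodesic homotopic to $f(\sigma_1)$ or to $f(\sigma_2)$ — whichever one Lemma~\ref{lem_geodesics_intersect_1} selects.

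\medskip

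\textbf{Step 2: Apply Lemma~\ref{lem_geodesics_intersect_1}.} With the roles $\gamma_1 \leadsto \sigma_1$, $\gamma_2 \leadsto \sigma_2$, $\gamma_3 \leadsto \gamma_2$ (our second circle), $q_0 \leadsto p_3'$, Lemma~\ref{lem_geodesics_intersect_1} tells us that, for $K \leq K_3$, the geodesic $\gamma_3$ homotopic to one of $f(\sigma_1), f(\sigma_2)$ is $6\delta_0$-short and crosses $\gamma_2$ transversely at a point $p_3$ with $d(p_3, p_3') \leq 1/20$. Since $p_3$ lies within hyperbolic distance $1/20$ of $p_3'$, which lies on the interior of the arc $\eta_3$ (it is at distance $\tfrac14\lambda(M)$ from $p_1$, and $\lambda(M) \geq 10$, so a $1/20$-neighbourhood stays in the interior of $\eta_3$), the intersection point $p_3$ is in the interior of $\eta_3$, and the distance estimate along $\gamma_2$ yields
\[
\left( \tfrac{1}{4} - \tfrac{1}{20} \right)\lambda(M) \leq d(p_j, p_3) \leq \left( \tfrac{1}{4} + \tfrac{1}{20} \right)\lambda(M), \qquad j=1,2,
\]
which is (ii). (Here $d(p_j,p_3)$ is measured along $\gamma_2$; since these arcs are geodesic segments shorter than $\iota(M)$ this equals the hyperbolic distance, but in any case the displayed inequality is what we record.) This also gives (i), since $\gamma_1 \cup \gamma_2$ was chosen to be a two-circle configuration.

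\medskip

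\textbf{Step 3: Verify (iii) — the combinatorics of the crossings.} This is the step I expect to be the main obstacle. We know $\gamma_3$ is $6\delta_0$-short and crosses $\gamma_2$ in the interior of $\eta_3$; by the Two-circle Lemma applied to each of the pairs $(\gamma_3,\gamma_1)$ and $(\gamma_3,\gamma_2)$ (legitimate since $6\delta_0 < 1/3$), $\gamma_3$ meets $\gamma_1$ in exactly two points and $\gamma_2$ in exactly two points, and all the resulting arcs have length close to $\tfrac12\lambda(M)$ by~\eqref{eq_two_circle}. So $\gamma_3$ crosses $\gamma_1 \cup \gamma_2$ in four points total, distributed as: two on $\gamma_1 = \eta_1 \cup \eta_2$, two on $\gamma_2 = \eta_3 \cup \eta_4$, with (at least) one of the latter already placed in $\Int(\eta_3)$ by Step 2. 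What must be ruled out is that two of these four crossings land on the same arc $\eta_i$ (leaving some $\eta_j$ uncrossed). I would argue this by a length/homotopy contradiction in the spirit of the Two-circle Lemma: if, say, both crossings of $\gamma_3$ with $\gamma_1$ lay on $\eta_1$, then a short subarc of $\eta_1$ together with a subarc of $\gamma_3$ would bound a homotopically nontrivial closed curve (by Lemma~\ref{lem_curves_properties}(iii)) whose length one can bound strictly below $\lambda(M)$ using the arc-length estimates~\eqref{eq_two_circle} for $\gamma_1$ (giving $|\eta_1| \approx \tfrac12\lambda(M)$), the $6\delta_0$-shortness of $\gamma_3$, and the fact that the second crossing of $\gamma_3$ with $\gamma_2$ sits near the midpoint of $\eta_3$ — which is impossible. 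The same argument, using the angle information transported through Lemma~\ref{lem_geodesics_intersect_1} and the near-ideal-triangle picture of the lift of the tight pair of pants, pins down that $\gamma_3$ must cross $\eta_4$ (not $\eta_3$ twice) and exactly one of $\eta_1, \eta_2$ is... — more precisely, that each $\eta_i$ is met exactly once. Running these case checks carefully, using only that $\delta_0 = 1/378$ is small enough that all the "$<\lambda(M)$" inequalities have room to spare, completes the verification of (iii), and hence the lemma. The bookkeeping of which arc each of the four crossing points lands on, and confirming no arc is missed, is the delicate part; everything else is a direct application of results already established.
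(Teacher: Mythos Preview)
Your Steps 1 and 2 are correct and essentially coincide with the paper's argument (the paper places the target point at the midpoint of $\eta_3$ rather than at arc-distance $\tfrac14\lambda(M)$ from $p_1$, but this is immaterial).

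Step 3, however, has a genuine gap. The example case you sketch --- both crossings of $\gamma_3$ with $\gamma_1$ landing on $\eta_1$ --- cannot be ruled out by the length argument you indicate. The subarc of $\eta_1$ between the two crossings need not be short: it could have length close to $|\eta_1|\approx\tfrac12\lambda(M)$, and together with a subarc of $\gamma_3$ of length close to $\tfrac12\lambda(M)$ you only get a bound near $\lambda(M)$, not strictly below it. Invoking ``angle information'' or the ``near-ideal-triangle picture'' does not help here; those tools were used earlier to produce the configuration, not to control these arc lengths.

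The paper's route is both simpler and in a different logical order. One first rules out a \emph{second crossing of $\gamma_3$ with $\eta_3$}: if $\gamma_3$ met $\eta_3$ again at some $p'$, then the subarc of $\eta_3$ from $p_3$ to $p'$ has length at most $(\tfrac14+\tfrac{1}{20})\lambda(M)$ (this is exactly where the careful placement of $p_3$ is used), and concatenating with the shorter arc of $\gamma_3$ between $p_3$ and $p'$ gives a nontrivial closed curve of length at most $(\tfrac34+\tfrac{1}{20}+3\delta_0)\lambda(M)<\lambda(M)$, a contradiction. Once $\gamma_3\cap\eta_3=\{p_3\}$, a purely topological argument finishes: crossing $\eta_3$ once forces $\gamma_3$ to pass from the region bounded by $\eta_1\cup\eta_3$ into the region bounded by $\eta_2\cup\eta_3$, and since it cannot exit through $\eta_3$ again it must cross both $\eta_1$ and $\eta_2$; the two-circle Lemma then pins the count at exactly one each, and the remaining crossing with $\gamma_2$ lies on $\eta_4$. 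In fact a brief region-adjacency check shows that the \emph{only} obstruction to the pattern $(1,1,1,1)$ is $\#(\gamma_3\cap\eta_3)=2$, so the $\eta_3$ case is the one to attack first, not the $\eta_1$ case.
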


\begin{proof}
As $K_3 \leq K_1$, by Lemma~\ref{lem_two_circles}, there exists a two-circle configuration, comprised of two $\delta_0$-short geodesics 
$\gamma_1, \gamma_2 \subset M$. Label the configuration according to Figure~\ref{fig_two_circle}. Mark a point 
$q \in \eta_3 \subset \gamma_2$ such that $d(q,p_1) = d(q,p_2)$. As $K_3 \leq K_2$, by Lemma~\ref{lem_large_angle}, there exist two
$3 \delta_0$-short geodesics $\gamma_3, \gamma_4$ intersecting at a point $p \in M$, such that 
\[ \angle ( \gamma_3, \gamma_4 )_p \geq \pi/4. \]
Applying Lemma~\ref{lem_geodesics_intersect_1} to $\gamma_3$ and $\gamma_4$ and the target point $q \in \eta_3 \subset \gamma_2$, there exists a $6 \delta_0$-short geodesic $\gamma'$ intersecting $\gamma_2$ transversely at a point $q' \in \gamma_2$ such that $d(q,q') \leq 1/20$. 
Therefore, setting $p_3 := q'$ and $\gamma_3 := \gamma'$, the conditions (i) and (ii) of Lemma~\ref{lem_exist_three_circle} are satisfied. 

We are left with showing that condition (iii) of Lemma~\ref{lem_exist_three_circle} is satisfied. That is, we need to show that $\gamma_3$ intersects 
the interior of the arc $\eta_i$ in exactly one point for every $1 \leq i \leq 4$. To this end, we first show that $\gamma_3$ can not intersect
the arc $\eta_3$ (including the boundary points $p_1$ and $p_2$) more than once. To show this is indeed impossible, suppose that $\gamma_3$ intersects the arc $\eta_3$ in a point $p' \subset \eta_3$ other than $p_3$. Let $\alpha_1, \alpha_2 \subset \eta_3$ be the connected components of $\eta_3 \setminus\{p_3\}$. We may assume that $p' \subset \alpha_1$, the case when $p' \in \alpha_2$ is similar. Therefore, if we let $\alpha' \subset \alpha_1$ the subarc with endpoints $p_3$ and $p'$, where we include the case that $p' = p_1$, then it follows that 
\[ |\alpha'| \leq \left( \frac{1}{4} + \frac{1}{20} \right) \lambda(M). \] 
The two points $p_3$ and $p'$ cut $\gamma_3$ into two component arcs $\beta_1$ and $\beta_2$, one component of which is of 
length at most $(1+ 6 \delta_0)\lambda(M)/2$; without loss of generality, we may suppose this is the case for $\beta_1$. Then the closed 
curve $\alpha' \cup \beta_1$ is homotopically nontrivial and 
\[ | \alpha' \cup \beta_1 | \leq \left( \frac{1}{4} + \frac{1}{20} \right) \lambda(M) + \frac{(1+ 6 \delta_0)\lambda(M)}{2} =  
\left( \frac{3}{4} + \frac{1}{20} + 3 \delta_0 \right) \lambda(M) < \lambda(M), \]
as $\delta_0 = 1/378$, which is a contradiction. Therefore, $\gamma_3$ intersects $\gamma_2$ at the point $p_3$, but does not intersects 
the arc $\eta_3$ in any point other than $p_3$, and $\gamma_3$ does not pass through $p_1$ or $p_2$.

As $\gamma_3$ intersects $\gamma_1$, by Lemma~\ref{lem_curves_properties} (ii), there has to exist at least one more intersection point of $\gamma_3$ 
with $\gamma_2$. By the above argument, all other intersection points are contained in the interior of the arc $\eta_4$. By the two-circle Lemma, applied to $\delta = 6 \delta_0$, $\gamma_3$ intersects $\gamma_2$ only twice, and therefore $\gamma_3$ intersects the interior of $\eta_4$ exactly
once. Similarly, as $\gamma_3$ intersects the arc $\eta_3$ exactly once, $\gamma_3$ has to intersect the interior 
of the arc $\eta_1$ and $\eta_2$ at least once. Again by the two-circle Lemma, applied to $\delta = 6 \delta_0$, as the total number of intersection
points of $\gamma_3$ with $\gamma_1$ is exactly two, $\gamma_3$ has to intersect the interior of the arc $\eta_1$ and $\eta_2$ exactly once. Thus condition (iii) is indeed satisfied and this proves the Lemma.
\end{proof}

\subsection{Proof of Theorem A}\label{subsec_pf_thm_4b}

The endgame of the proof of Theorem A. is a combinatorial argument layered on the three-circle configuration of the Three-Circle Lemma.
Thus, let $1 < K \leq K_3$ with $K$ the quasiconformal homogeneity constant of $M$, with $K_3$ the constant which we obtained in the Three-Circle Lemma in order to ensure the existence of a three-circle configuration. 

\begin{lem}\label{lem_triangle_non_trivial}
Let $\gamma_1, \gamma_2, \gamma_3$ be the three-circle configuration of Lemma~\ref{lem_exist_three_circle}. 
Then the triangle $T_j$ is non-trivial, for every $1 \leq j \leq 8$. 
\end{lem}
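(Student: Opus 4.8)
The plan is to prove that every arc into which the four points $\gamma_2\cap(\gamma_1\cup\gamma_3)$ cut $\gamma_2$ has hyperbolic length strictly greater than $\lambda(M)/7$. Granting this, no $T_j$ can be trivial: by Lemma~\ref{lem_triangle_short_arcs} a trivial triangle would have all three of its sides of length at most $\lambda(M)/7$, and (as I note below) each $T_j$ has exactly one side lying on $\gamma_2$, whose length would then have to exceed $\lambda(M)/7$ --- a contradiction.

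First I would fix notation. Write $\gamma_1\cap\gamma_2=\{p_1,p_2\}$ and $\gamma_2\cap\gamma_3=\{p_3,p_4\}$, where, by Lemma~\ref{lem_exist_three_circle}\,\textup{(iii)}, $\gamma_3$ meets $\gamma_2=\eta_3\cup\eta_4$ in exactly these two points, one in the interior of $\eta_3$ (namely $p_3$) and one in the interior of $\eta_4$ (namely $p_4$). Hence the cyclic order of the four points along $\gamma_2$ is $p_1,p_3,p_2,p_4$, and $\gamma_2$ is cut into the four arcs $a_1=[p_1,p_3]\subset\eta_3$, $a_2=[p_3,p_2]\subset\eta_3$, $a_3=[p_2,p_4]\subset\eta_4$, $a_4=[p_4,p_1]\subset\eta_4$, with $\eta_3=a_1\cup a_2$ and $\eta_4=a_3\cup a_4$. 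I would also record the elementary fact that $\partial T_j$ has exactly one side on each of $\gamma_1,\gamma_2,\gamma_3$: if two sides of $\partial T_j$ lay on the same (simple) curve $\gamma_i$, they would meet at a point of $\gamma_i\cap\gamma_k$ with $k\neq i$, but near such a crossing the boundary of any complementary region consists of one arc of $\gamma_i$ and one arc of $\gamma_k$; so no two of the three sides lie on a common curve, and with three sides and three curves there is precisely one side on each.

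The substance of the proof is the length bound for $a_1,\dots,a_4$. For $a_1$ and $a_2$ it is immediate from Lemma~\ref{lem_exist_three_circle}\,\textup{(ii)}: the arc $a_1$ joins $p_1$ to $p_3$, so $|a_1|\geq d(p_1,p_3)\geq(\tfrac14-\tfrac1{20})\lambda(M)=\lambda(M)/5>\lambda(M)/7$, and likewise $|a_2|\geq d(p_2,p_3)\geq\lambda(M)/5$. For $a_3$ and $a_4$ I would combine two applications of the two-circle Lemma (valid since $K\leq K_3\leq K_1$). Applying Lemma~\ref{lem_two_circles} to the $\delta_0$-short geodesics $\gamma_1,\gamma_2$ gives $|\eta_3|\leq(\tfrac12+\tfrac{3\delta_0}{2})\lambda(M)$; applying it to $\gamma_2$ and $\gamma_3$ --- both $6\delta_0$-short, and $6\delta_0<1/3$ --- shows that $\gamma_2\cup\gamma_3$ is a two-circle configuration and that the two arcs of $\gamma_2$ joining $p_3$ to $p_4$, which are $a_1\cup a_4$ and $a_2\cup a_3$, each have length at least $(\tfrac12-3\delta_0)\lambda(M)$. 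Consequently $|a_2|=|\eta_3|-|a_1|\leq(\tfrac3{10}+\tfrac{3\delta_0}{2})\lambda(M)$, and hence $|a_3|=|a_2\cup a_3|-|a_2|\geq(\tfrac15-\tfrac{9\delta_0}{2})\lambda(M)>\lambda(M)/7$ because $\delta_0=1/378$. Interchanging the roles of $a_1,a_2$ and of $a_3,a_4$ gives $|a_4|>\lambda(M)/7$ in the same way.

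Putting the pieces together, every side of every $T_j$ that lies on $\gamma_2$ has length greater than $\lambda(M)/7$, so by the contrapositive of Lemma~\ref{lem_triangle_short_arcs} no $T_j$ is trivial. I expect the only point requiring care to be the arithmetic that turns the two-circle estimates into the strict inequality $|a_3|,|a_4|>\lambda(M)/7$ for the specific value $\delta_0=\tfrac1{378}$; the rest is soft topology and direct substitution.
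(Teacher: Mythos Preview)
Your proof is correct and follows essentially the same approach as the paper: both reduce to showing that each of the four arcs of $\gamma_2\setminus\{p_1,p_2,p_3,p_4\}$ is long enough to contradict Lemma~\ref{lem_triangle_short_arcs} (you prove $>\lambda(M)/7$, the paper proves the slightly stronger $>\lambda(M)/6$), handling the two arcs in $\eta_3$ directly from condition~(ii) and the two in $\eta_4$ via a pair of two-circle estimates. The only difference is bookkeeping---the paper subtracts the two two-circle inequalities to bound $\bigl||y_2|-|y_4|\bigr|$ and then invokes the known lower bound on $|y_4|$, whereas you first upper-bound $|a_2|$ and then lower-bound $|a_3|=|a_2\cup a_3|-|a_2|$; these are equivalent rearrangements of the same inequalities.
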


\begin{figure}[h]
\begin{center}
\psfrag{C_1}{$\gamma_2$}
\psfrag{C_2}{$\gamma_1$}
\psfrag{C_3}{$\gamma_3$}
\psfrag{T_i}{$T_i$}
\psfrag{T_j}{$T_j$}
\psfrag{x_1}{$y_1$}
\psfrag{x_2}{$y_2$}
\psfrag{x_3}{$y_3$}
\psfrag{x_4}{$y_4$}
\psfrag{p_1}{$p_1$}
\psfrag{p_2}{$p_2$}
\psfrag{p_3}{$p_3$}
\psfrag{p_4}{$p_4$}
\psfrag{p_5}{$p_5$}
\psfrag{p_6}{$p_6$}
\includegraphics[scale=0.8]{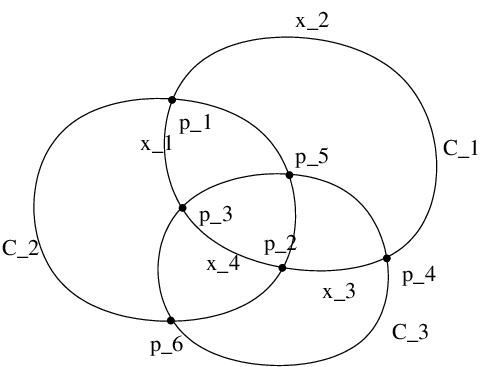}
\caption{Proof of Lemma~\ref{lem_triangle_non_trivial}.}
\label{fig_three_circle_3}
\end{center}
\end{figure}

\begin{proof}
By Lemma~\ref{lem_exist_three_circle}, the $6 \delta_0$-short geodesic $\gamma_3$ intersects $\gamma_2$ at a point 
$p_3 \in \eta_3 \subset \gamma_1$ and 
\begin{equation}\label{eq_lem_triangle_1}
\left( \frac{1}{4} - \frac{1}{20} \right) \lambda(M) \leq d ( p_k , p_3 ) \leq \left( \frac{1}{4} + \frac{1}{20} \right) \lambda(M),
\end{equation}
with $k = 1,2$, as given in Figure~\ref{fig_three_circle_3}. Let $y_i$ with $1 \leq i \leq 4$ be the connected components of 
$\gamma_2 \setminus \{ p_1, p_2, p_3, p_4\}$. It suffices to show that 
\begin{equation}\label{eq_arcs_long_edge}
|y_i| > \frac{\lambda(M)}{6},~ \textup{for}~1 \leq i \leq 4.
\end{equation}
To prove sufficiency, note that every triangle $\partial T_j$, with $1 \leq j \leq 8$, contains exactly one edge $y_i$ for some $1 \leq i \leq 4$.
Now, if~\eqref{eq_arcs_long_edge} holds, then by Lemma~\ref{lem_triangle_short_arcs}, $\partial T_j$ has to be non-trivial, as otherwise
all three edges of $\partial T_j$ have to be of length less than $\lambda/7 < \lambda /6$.

To prove~\eqref{eq_arcs_long_edge}, first note that, by~\eqref{eq_lem_triangle_1}, the arcs $y_1$ and $y_4$ satisfy 
requirement~\eqref{eq_arcs_long_edge}. Therefore, we are left with proving the estimate for $y_2$ and $y_3$. As both $\gamma_1$ and $\gamma_2$ are $\delta_0$-short, 
and $\gamma_3$ is $6 \delta_0$-short, the two-circle Lemma applied to the pairs of geodesics $\gamma_1, \gamma_2$ and $\gamma_2, \gamma_3$ gives respectively
\begin{equation}\label{eq_lem_triangle_2}
\frac{\lambda(M)}{2} - \frac{\delta_0 \lambda(M)}{2} \leq |y_2| + |y_3| \leq \frac{\lambda(M)}{2} + \frac{3\delta_0 \lambda(M)}{2},
\end{equation}
and 
\begin{equation}\label{eq_lem_triangle_3}
\frac{\lambda(M)}{2} - \frac{6\delta_0 \lambda(M)}{2} \leq |y_3| + |y_4| \leq \frac{\lambda(M)}{2} + \frac{18\delta_0 \lambda(M)}{2}.
\end{equation}
Combining~\eqref{eq_lem_triangle_2} and~\eqref{eq_lem_triangle_3}, it follows that
\begin{equation}\label{eq_lem_triangle_4}
-\frac{19 \delta_0 \lambda(M)}{2} \leq |y_2| - |y_4| \leq \frac{19 \delta_0 \lambda(M)}{2}.
\end{equation}
As $|y_4| = d(p_3, p_2)$, combining~\eqref{eq_lem_triangle_4} with~\eqref{eq_lem_triangle_1}, one obtains
\begin{equation}\label{eq_lem_triangle_5}
|y_2| \geq \lambda(M) \left( \frac{1}{4} - \frac{1}{20} - \frac{19 \delta_0}{2} \right) > \frac{\lambda(M)}{6},
\end{equation}
as $\delta_0 = 1/378$. By symmetry, the same estimate holds for the arc $|y_3|$. This concludes the proof.
\end{proof}

Let us now conclude the proof.

\begin{proof}[Proof of Theorem A]
Let $M$ be $K$-quasiconformally homogeneous with $1 < K \leq K_3$ and let $\gamma_1, \gamma_2, \gamma_3$ be the three-circle configuration of 
Lemma~\ref{lem_exist_three_circle}. By Lemma~\ref{lem_triangle_non_trivial}, all triangles $T_j$ with $1 \leq j \leq 8$ have to be non-trivial. Therefore, the length of $\partial T_j$ has to be at least $\lambda(M)$ for every $1 \leq j \leq 8$. Adding up the lengths of all $\partial T_j$, $1 \leq j \leq 8$, means we count every boundary arc of a triangle $T_j$ with multiplicity two and thus 
\begin{equation}\label{eq_final_1}
2 \sum_{i=1}^3 | \gamma_i | = \sum_{j=1}^8 | \partial T_j | \geq 8\lambda(M).
\end{equation}
However, as the geodesics $\gamma_i$, with $1 \leq i \leq 3$, are (at most) $6 \delta_0$-short by construction, the total length of these three geodesics counted with multiplicity two is bounded by
\begin{equation}\label{eq_final_2}
2 \sum_{i=1}^3 | \gamma_i| \leq 2 \cdot 3(1+ 6 \delta_0)\lambda(M) = 6(1+ 6 \delta_0) \lambda(M) < 7 \lambda(M), 
\end{equation}
as $36 \delta_0 < 1$. The contradictory claims~\eqref{eq_final_1} and~\eqref{eq_final_2} finish the proof.
\end{proof}

\section{Concluding remarks}

It would be interesting to obtain an explicit value for the universal constant $\KK > 1$, 
whose existence was shown in the proof of Theorem A. In~\cite{gehring}, examples are given 
of $K$-quasiconformally homogeneous genus zero surfaces, for which 
explicit estimates are computed of the quasiconformal homogeneity constant $K$ of $M$, 
which give the following upper bound on the universal constant $\KK$,
\begin{equation}
1 < \KK \leq \left( \frac{e}{s} \right)^4,
\end{equation}
where $s \approx 0.483$. The geometrical estimates in our proof can in many cases be improved to give 
explicit estimates of the quantities involved, except for the preliminary Lemma 2.2, 
whose known proof is based on a normal family argument (see~\cite{bonfert_1}). 
Using these estimates, can one find a sharper estimate for $\KK$?

\end{document}